\numberwithin{equation}{section}
\newtheorem{thm}{Theorem}[section]
\newtheorem{lem}[thm]{Lemma}
\newtheorem{prop}[thm]{Proposition}
\theoremstyle{definition}
\newtheorem{defn}[thm]{Definition}
\newtheorem{rem}[thm]{Remark}
\begin{document}

\title[stochastic incompressible Navier-Stokes equations]{Local and global  strong solutions to the stochastic incompressible Navier-Stokes equations in critical Besov space}

\author{Lihuai Du}
\email{dulihuai@zju.edu.cn}
\address{School of Mathematical Sciences, Zhejiang University, Hangzhou 310027, China}

\author{Ting Zhang}
\email{zhangting79@zju.edu.cn}
\address{School of Mathematical Sciences, Zhejiang University, Hangzhou 310027, China}

\maketitle
\begin{abstract}
Considering the stochastic Navier-Stokes system in $\mathbb{R}^d$ forced by a multiplicative white noise,  we establish the local existence and uniqueness of the strong solution when the initial data take values in the critical space $\dot{B}_{p,r}^{\frac{d}{p}-1}(\mathbb{R}^d)$.
 The proof is based on the contraction mapping principle, stopping time and stochastic estimates. Then we prove the global existence of strong solutions in probability  if the initial data are sufficiently small, which contain  a class of highly oscillating ``large" data.
 \end{abstract}

\section{Introduction}
\label{sec:1}
We study the Navier-Stokes equations in the whole space $\mathbb{R}^d(d\geq2)$ forced by a multiplicative white noise

\begin{equation}
\label{1.1}
\left\{\begin{array}{l}
du+(u\cdot\nabla u-\Delta u+\nabla p)dt=f(t,u) dW_H,~~t>0,\\
\nabla\cdot u=0,\\
u|_{t=0}=u_0(x),
\end{array}
\right.
\end{equation}
where $u=(u_1,\ldots, u_d)$ and $p$ represent the velocity field and the pressure respectively. The system (\ref{1.1}) describes the flow of a viscous incompressible fluid. The addition of the white noise driven terms to the basic governing equations is nature
for both practical and theoretical applications. Such stochastically forced terms are used to account for numerical and empirical uncertainties and have been proposed as a model for turbulence. In order to emphasize the stochastic effects and for simplicity of exposition, we do not include a deterministic forcing in $(\ref{1.1})$, but note that all the results of this paper may be easily modified to include this more general case.

As a consequence of stochastic partial differential equations (SPDE) such as the stochastic Navier-Stokes equations are gaining more and more interest in fluid mechanical research. First result can be traced back to the pioneering work by of Bensoussan and Temam \cite{bensoussan1973equations} in  1973, after that, there have been a lot of studies on the existence and uniqueness of the strong solution for the
stochastic Navier-Stokes system, refer to    \cite{breckner1900galerkin,menaldi2002stochastic,da2014stochastic} and the references therein. Capinski and Peszat \cite{capinski1997local} obtained the local existence and uniqueness of the strong solution in three-dimensional bounded domain with sufficiently regular initial data. Taniguchi \cite{taniguchi2011existence} considered the existence and uniqueness of the strong solution to the two dimensional stochastic Navier-Stokes equations with non-Lipschitz forces in the unbounded domain by local monotonicity method,
Holz and Ziane \cite{glatt2009strong} obtained the local existence and uniqueness of the strong solution for the stochastic Navier-Stokes equations in two-dimensional or three-dimensional bounded domains forced by a multiplicative noise when the initial data are in $H^1$, and they proved the global existence in the two-dimensional case.  Kim \cite{kim2010strong} obtained the local and global existence of the strong solution for the three-dimensional stochastic Navier-Stokes equations when the initial data are in $H^{\alpha+\frac{1}{2}}(\mathbb{R}^3),0<\alpha<\frac{1}{2}$. On the other hand, many author consider the
martingale solutions to the stochastic Navier-Stokes equations,
see e.g.   \cite{capinski1994stochastic,capinski2001existence,flandoli1995martingale,flandoli1995ergodicity,
mikulevicius2004stochastic,mikulevicius2005global}.

If $f\equiv 0$, the system (\ref{1.1}) is the classical Navier-Stokes system
and is invariant with the scaling
 $$u_\lambda(x,t)=\lambda u(\lambda x,\lambda^2t),~~~~P_\lambda(x,t)=\lambda P(\lambda^2x,\lambda^2t),$$
 for any $\lambda>0$. Hence, we introduce the scaling $\varphi_\lambda(x)=\lambda\varphi(\lambda x).$ A function space $X$ on $\mathbb{R}^d$ is called critical to the Navier-Stokes equations if it satisfies the property $\|\varphi_\lambda\|_{{X}}=\|\varphi\|_{{X}}$. Examples of critical spaces for the Navier-Stokes equations are
    $$
    \dot{H}^{\frac{d}{2}-1}(\mathbb{R}^d),\ L^d(\mathbb{R}^d),\ \dot{B}_{p,r}^{\frac{d}{p}-1}(\mathbb{R}^d)\ (1\leq p,r\leq \infty),\ BMO^{-1}(\mathbb{R}^d),
    $$
    see   \cite{Chemin1999Th,Cannone93,fujita1964navier,kato1962nonstationary,koch2001well}.
Kato and Fujita \cite{fujita1964navier,kato1962nonstationary} established the existence and uniqueness of the mild solution when $u_0\in H^{\frac{d}{2}-1}$ and $L^d$, and J. -Y.
Chemin \cite{Chemin1999Th} considered in the Besov space $\dot{B}_{p,r}^{\frac{d}{p}-1}$ with $1\leq p<\infty, 1\leq r\leq \infty$, which contains a class of
highly oscillating \lq\lq  large\rq\rq initial data, for example,
    \begin{equation}\label{ep}
    u_0^\epsilon(x)=\varepsilon^{\frac{3}{p}-1}\sin(\frac{x_1}{\varepsilon})(0,-\partial_{x_3}\phi(x),\partial_{x_2}\phi(x)), \ \phi(x)\in \mathcal{S}(\mathbb{R}^3),\ p\in[3,\infty).
   \end{equation}
Therefore, there is a gap between the stochastic Navier-Stokes equations and the classical Navier-Stokes equations.

Our aim here is to extend the partial result of   \cite{Chemin1999Th} to stochastic case. We establish two main results for the system (\ref{1.1}). The first result  (Theorem \ref{C2}) addresses the local existence and uniqueness of the strong solution, which evolve continuously in the
Besov space $\dot{B}_{p,r}^{\frac{d}{p}-1}(\mathbb{R}^d)$, for any
\begin{equation}
2\leq p<\infty,
\
\left\{
\begin{array}{l}
2\leq r<\infty,~~~~~~~~~\text{if}~~2\leq p\leq d,\\
2\leq r<\frac{2p}{p-d},~~~~~~~\text{if}~~d<p<\infty.
\end{array}
\right.\label{pr}
\end{equation}

In the second result  (Theorem \ref{C3}), we turn to the issue of the global existence of the strong solutions with multiplicative noise in $d\geq 2$. When the noises have the special structure, then
for any $0<\epsilon<1$, there exists a $\delta(\epsilon)>0$, such that when $$\mathbb{E}\|u_0\|_{\dot{B}_{p,r}^{\frac{d}{p}-1}}^r\leq \delta(\epsilon),$$
we have
$$\mathbb{P}(u ~\text{is global})\geq 1-\epsilon.$$
Thus, there is a large probability that the system (\ref{1.1}) has the unique global solution with
the high oscillating initial velocity $u_0^\epsilon$ in (\ref{ep})   when $\epsilon$ is sufficient small.

The manuscript is organized as follows. In Section \ref{sec:2} we review some mathematical backgrounds, deterministic and stochastic, needed throughout the rest of
the work. Section \ref{sec:3} contains the precise definitions of solutions to $(\ref{1.1})$, along with statements of our main results. And we shall prove the global existence  of the strong solutions of the stochastic
modified equations (\ref{4.3}) by the contraction mapping principle in Section \ref{sec:4}. The final section is devote to proofs of the local existence and global existence
of  (\ref{1.1}). Appendix gather various additional technical tools used throughout the paper.
\section{Preliminaries}
\label{sec:2}
Here we recall some deterministic and stochastic ingredients which be used throughout this paper.
\subsection{Deterministic background}
\label{sec:2.1}
We first introduce Littlewood-Paley decomposition and  the definition of Besov spaces, which could refer to the nice book   \cite{bahouri2011fourier}.
Choose nonnegative smooth functions $\varphi$, supported respectively a ring $\mathcal{C}=\{\xi\in\mathbb{R}^d,\frac{3}{4}\leq\xi\leq\frac{8}{3}\}$ such that
 $$
 \sum\limits_{j\in\mathbb{Z}}\varphi(2^{-j}\xi)=1,~~~\xi\in\mathbb{R}^d\backslash\{0\},
 $$
 $$
|i-j|\geq 2\Rightarrow ~\text{Supp}~\varphi(2^{-i}\cdot)\cap~\text{Supp}~\varphi(2^{-j}\cdot)=\emptyset.
$$
The Littlewood-Paley blocks are defined as
$$\Delta_{j}u=\mathcal{F}^{-1}(\varphi(2^{-j})\mathcal{F}u),~~\forall j\in\mathbb{Z}.$$
Then $\Delta_j u=K_j\ast u$, where $K_j=\mathcal{F}^{-1}\varphi(2^{-j}\cdot)$. We also use the notation
$$S_j f=\sum_{i\leq j-1}\Delta_j f.$$
Let $\mathcal{S}'_h(\mathbb{R}^d)$ be the space of
tempered distributions $u$ such that
$$\lim\limits_{j\rightarrow{-\infty}}S_j u=0~~\text{in}~\mathcal{S}'.$$
We recall  the definitions of
homogeneous Besov space $\dot{B}_{p,r}^s(\mathbb{R}^d)$ and Chemin-Lerner-type spaces $\mathcal{L}_T^q(\dot{B}_{p,r}^s(\mathbb{R}^d))$.
\begin{defn}
Let $(p,r)\in [1,+\infty]^2$, $s\in \mathbb{R}$, we define
$$\dot{B}_{p,r}^s(\mathbb{R}^d)
:=\{u\in \mathcal{S}_h'(\mathbb{R}^d)|\|u\|_{\dot{B}_{p,r}^s}:=\{2^{js}\|\Delta_ju\|_{L^p}\}_{\ell^r}<\infty\}.$$
\end{defn}
\begin{defn}For $T>0$, $s\in \mathbb{R}$, and $1\leq q,r\leq \infty$, we set
$$\|u\|_{\mathcal{L}_T^q(\dot{B}_{p,r}^s)}:=\|2^{js}\|\Delta_ju\|_{L_T^q(L^p)}\|_{\ell^r(\mathbb{Z})}.$$
We then define the space $\mathcal{L}_T^q(\dot{B}_{p,r}^s)$ as the set of tempered distribution $u$ over $(0,T)\times\mathbb{R}^d$ such
that $\lim\limits_{j\rightarrow -\infty} S_j u=0$ in $L^q([0,T];L^\infty(\mathbb{R}^d))$ and
$\|u\|_{\mathcal{L}_T^q(\dot{B}_{p,r}^s)}<\infty$.
\end{defn}

\subsection{Background on stochastic analysis}
\label{sec;2.2}
We next briefly recall some aspects on the theory of the infinite dimensional stochastic analysis which we use below.
We refer the reader to   \cite{van2012maximal,van2012stochastic} for an extended treatment of the subject. For this purpose we start by fixing a stochastic
basis $(\Omega,\mathscr{F},\mathbb{P})$ endowed with filtration $\{\mathscr{F}_t\}_{t\geq 0}$. An
$\mathscr{F}$-cylindrical Brownian motion on a Hilbert space $H$  is a bounded linear operator $\mathcal{W}_H:
L^2(\mathbb{R}_{+};H)\rightarrow L^2(\Omega)$ such that

$(\expandafter{\romannumeral1})$ for all $t\geq 0$ and $h\in H$, the random variable $W_H(t)h:=\mathcal{W}_H(1_{(0,t]}\otimes h)$
is centred Gaussian and $\mathscr{F}_t$-measurable;

$(\expandafter{\romannumeral2})$ for all $t_1,t_2\geq 0$ and $h_1,h_2\in H$, we have $\mathbb{E}(W_H(t_1)h_1\cdot W_H(t_2)h_2)
=t_1\wedge t_2 [h_1,h_2]$;

$(\expandafter{\romannumeral3})$ for all $t_2\geq t_1\geq 0$ and $h\in H$, the random variable $W_H(t_2)h-W_H(t_1)h$ is independent of $\mathscr{F}_{t_1}$.

It is easy to see that for all $h\in H$, the process $(t,\omega)\rightarrow (W_H(t)h)(\omega)$
is an $\mathscr{F}$-Brownian motion (which is standard if $\|h\|=1$). Moreover, two such Brownian motion $(W_H(t)h_1)_{t\geq 0}$ and
$(W_H(t)h_2)_{t\geq 0}$ are independent if and only if $h_1$ and $h_2$ are orthogonal in $H$.

For $0\leq a<b<\infty$, $\mathscr{F}_a$-measurable sets $F\subset\Omega$, $h\in H$,and $f\in L^p(\mathbb{R}^d)$ the stochastic integral of the
indicator process $(t,\omega)\mapsto 1_{(a,b]\times F}f\otimes h$ with respect to $W_H$ is defined as the $L^p(\mathbb{R}^d)$-valued random variable
$$\int_0^t1_{(a,b]\times F}(f\otimes h)dW_H:=(W_H(t\wedge b)h-W_H(t\wedge a)h)1_Ff,~~~~t\geq 0.$$
By linearity, this definition extends to adapted finite rank step processes $G:\mathbb{R}_+\times \Omega\rightarrow L^p(\mathbb{R}^d;H)$,
which we define as finite linear combinations of adapted indicator process of the above form. Recall that a process
$G:\mathbb{R}_+\times\Omega\rightarrow L^p(\mathbb{R}^d;H)$ is called $\mathscr{F}$-adapted if for every $t\in\mathbb{R}_+,\omega\mapsto G(t,\omega)$
is $\mathscr{F}_t$-measurable.

The next result is a special case of Theorem 6.2 in   \cite{van2007conditions}.
\begin{prop}
\label{BGD}
Let $p\in (1,\infty)$ and $q\in (1,\infty)$ be fixed. For all $\mathscr{F}$-adapted finite rank step process $G:\mathbb{R}_+\times\Omega\rightarrow
L^q(\mathbb{R}^d,H)$, we have the \lq\lq It\^{o} isomorphism\rq\rq
\begin{equation}
\label{2.1}
 c^p\mathbb{E}\|G\|_{L^q(\mathbb{R}^d;L^2(\mathbb{R}_{+};H))}^p\leq\mathbb{E}\left\|\int_0^\infty GdW_H\right\|_{L^q(\mathbb{R}^d)}^p \leq C^p\mathbb{E}\|G\|_{L^q(\mathbb{R}^d;L^2(\mathbb{R}_{+};H))}^p
\end{equation}
with constants $0<c\leq C<\infty$ independent of $G$.
\end{prop}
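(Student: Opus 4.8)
The plan is to recognise Proposition \ref{BGD} as the special case $X=L^q(\mathbb{R}^d)$ of the general two-sided stochastic integral estimate valid in an arbitrary UMD Banach space, and then to convert the abstract right-hand side of that estimate into the concrete square-function norm $\|\cdot\|_{L^q(\mathbb{R}^d;L^2(\mathbb{R}_+;H))}$. The two structural facts I would rely on are: (i) $L^q(\mathbb{R}^d)$ is a UMD space for every $q\in(1,\infty)$, and (ii) for a Hilbert space $\mathcal{H}$ the space of $\gamma$-radonifying operators $\gamma(\mathcal{H},L^q(\mathbb{R}^d))$ coincides, up to $q$-dependent constants, with the Bochner space $L^q(\mathbb{R}^d;\mathcal{H})$ via the classical square-function identification.

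First I would invoke the UMD stochastic integration theorem of van Neerven, Veraar and Weis (the content of the cited Theorem 6.2): for any $\mathscr{F}$-adapted finite rank step process $G$ valued in a UMD space $X$ one has $\mathbb{E}\|\int_0^\infty G\, dW_H\|_X^p \eqsim_{p,X} \mathbb{E}\|G\|_{\gamma(L^2(\mathbb{R}_+;H),X)}^p$. At its core this estimate is proved by a decoupling argument—Garling's randomisation inequalities, available precisely because $X$ is UMD—which replaces the martingale increments by independent Gaussian multipliers and thereby reduces both sides to a Gaussian quadratic functional that is by definition the $\gamma$-radonifying norm. I would take this theorem as the input furnished by the cited reference rather than reprove the decoupling step.

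Second, I would specialise to $\mathcal{H}=L^2(\mathbb{R}_+;H)$ and $X=L^q(\mathbb{R}^d)$ and carry out the $\gamma$-norm identification. A finite rank $G$ induces an operator with kernel $x\mapsto G(x,\cdot)\in\mathcal{H}$; the standard computation for $L^q$ targets—apply the Kahane–Khintchine inequality inside the $L^q(\mathbb{R}^d)$ integral, then use Fubini to pull the Gaussian expectation through the $x$-integral—yields $\|G\|_{\gamma(L^2(\mathbb{R}_+;H),L^q)} \eqsim_q \big\|\,\|G(x,\cdot)\|_{L^2(\mathbb{R}_+;H)}\,\big\|_{L^q_x} = \|G\|_{L^q(\mathbb{R}^d;L^2(\mathbb{R}_+;H))}$. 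Chaining this equivalence with the UMD estimate produces both inequalities of \eqref{2.1} with constants $c,C$ depending only on $p$ and $q$, as asserted.

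As a sanity check and a genuinely elementary sub-case, I would observe that when $p=q$ the entire statement drops out of the scalar Burkholder–Davis–Gundy inequality applied fibrewise: by Fubini, $\mathbb{E}\|\int G\, dW_H\|_{L^q}^q = \int_{\mathbb{R}^d}\mathbb{E}|\int_0^\infty G(x)\, dW_H|^q\, dx$, and scalar BDG turns each inner expectation into $\mathbb{E}\|G(x,\cdot)\|_{L^2(\mathbb{R}_+;H)}^q$, whose $x$-integral is exactly $\mathbb{E}\|G\|_{L^q(L^2)}^q$. The difficulty I expect is concentrated entirely in the off-diagonal range $p\neq q$: there the $L^p(\Omega)$ and $L^q(\mathbb{R}^d)$ norms can no longer be interchanged, the fibrewise argument collapses, and one is forced through the $\gamma$-radonifying/decoupling machinery whose very availability hinges on the UMD property of $L^q$. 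Justifying those decoupling inequalities and the ensuing $\gamma$-norm characterisation uniformly in $G$ is the main obstacle, and it is precisely the work carried out in the cited Theorem 6.2.
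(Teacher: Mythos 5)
Your proposal is correct and takes essentially the same route as the paper: the paper offers no independent proof of Proposition \ref{BGD}, simply declaring it a special case of Theorem 6.2 in \cite{van2007conditions}, which is exactly the UMD stochastic integration theorem you invoke. The extra details you supply---the identification $\gamma(L^2(\mathbb{R}_+;H),L^q(\mathbb{R}^d))\simeq L^q(\mathbb{R}^d;L^2(\mathbb{R}_+;H))$ via Kahane--Khintchine and Fubini, and the fibrewise Burkholder--Davis--Gundy check for $p=q$---are accurate and merely make explicit what the paper's citation subsumes.
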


By a standard density argument, these inequalities can be used to extend the stochastic integral to the Banach space
$L_{\mathscr{F}}^p(\Omega;L^q(\mathbb{R}^d;L^2(\mathbb{R}_+;H)))$ of all $\mathscr{F}$-adapted process $G:\mathbb{R}_+\times\Omega\rightarrow L^q(\mathbb{R}^d;H)$ which belong to $L^p(\Omega;L^q(\mathbb{R}^d;L^2(\mathbb{R}_+;H)))$. In the remainder of this paper, all stochastic integrals
are understood in the above sense.

By Minkoski's inequality,  for $q\in [2,\infty)$ one has
$$\|G\|_{L^q(\mathbb{R}^d;L^2(\mathbb{R}_+;H))}\leq \|G\|_{L^2(\mathbb{R}_+;L^q(\mathbb{R}^d;H))}.$$
In combination with Proposition \ref{BGD}, for $p\in(1,\infty)$ and $q\in[2,\infty)$, this gives the one-sided inequality
\begin{equation}
\label{2.2}
\mathbb{E}\bigg\|\int_0^\infty GdW_H\bigg\|_{L^q(\mathbb{R}^d)}^p \leq \mathbb{E}\|G\|^p_{L^2(\mathbb{R}_+;L^q(\mathbb{R}^d;H))}.
\end{equation}
Since we consider strong solutions of  the system (\ref{1.1}) evolving in $\dot{B}_{p,r}^{\frac{d}{p}-1}$, then we define
\begin{equation}
\label{b3}
\mathbb{B}_{p,r}^s:=\{G\in\mathcal{S}_h'(\mathbb{R}^d;H)|\|G\|^r_{\dot{\mathbb{B}}_{p,r}^s}:=\sum\limits_{j=-\infty}^{+\infty}2^{jrs}\|\Delta_j G\|^r_{L^p(\mathbb{R}^d;H)}<\infty\},
\end{equation}
where $1\leq p,r\leq \infty$, $s\in \mathbb{R}$.

\section{Main results}
\label{sec:3}
Recall that $\nabla p$ can be eliminated by projecting  onto the space of divergence free vector fields, using the
Leray projector
$$\mathbf{P}=\text{Id}+\nabla(-\Delta)^{-1}\text{div}.$$
By the divergence free condition, (\ref{1.1}) can be rewritten as
\begin{equation}
\label{3.1}
\left\{\begin{array}{l}
du+(\mathbf{P}\text{div}(u\otimes u)-\Delta u)dt=\mathbf{P}f(t,u)dW_H,\\
u|_{t=0}=u_0.
\end{array}
\right.
\end{equation}
With the mathematical preliminaries in hand, we make precise the definition of  local strong solutions of the stochastic Navier-Stokes equations.
\begin{defn}
\label{C1}
Suppose that $2\leq p,r\leq\infty$. Fix a stochastic basis $\mathcal{S}:=(\Omega,\mathscr{F},\mathbb{P},\{\mathscr{F}_t\}_{t\geq 0},W_H)$ and
$u_0$ an $\dot{B}_{p,r}^{\frac{d}{p}-1}$ valued $\mathscr{F}_0$-measurable random variable.

$(1)$ $(u,\tau)$ is called a local strong solution of the stochastic Navier-Stokes equations (\ref{1.1}) if the following conditions are satisfied:
\begin{itemize}
\item $u$ is right continuous progressively measurable process and $u\in L^r(\Omega;\mathcal{L}_{loc}^\infty((0,\infty);\dot{B}_{p,r}^{\frac{d}{p}-1}))\cap L^r(\Omega;L^r_{loc}((0,\infty)
;\dot{B}_{p,r}^{\frac{d}{p}-1+\frac{2}{r}}))$ ;

\item
\begin{equation}
\label{3.2}
\begin{split}
{\sigma}(\omega)&=\left\{\begin{array}{l}
\text{inf}\bigg\{0\leq t<\infty:\|u\|_{L_t^r\dot{B}_{p,r}^{\frac{d}{p}-1+\frac{2}{r}}}\geq R\bigg\},\\
\infty,\text{if the above set $\{\cdot\}$ is empty};
\end{array}
\right.\\
{\varrho}_N(\omega)&=\left\{\begin{array}{l}
\text{inf}\bigg\{0\leq t<\infty:\|u\|_{\dot{B}_{p,r}^{\frac{d}{p}-1}}\geq N\bigg\},\\
\infty,\text{if the above set $\{\cdot\}$ is empty};
\end{array}
\right.\\
\tau_N&=\min\{\sigma,\varrho_N\}
\end{split}
\end{equation}
where $R$ is  determined by (\ref{4.14}), and
\begin{equation}
\label{3.3}
\tau(\omega)=\lim\limits_{N\rightarrow\infty}\tau_N(\omega)~~\text{for almost all}~\omega.
\end{equation}
\item $u(t,x)\in C([0,\tau(\omega));\dot{B}_{p,r}^{\frac{d}{p}-1})$ for almost all $\omega\in\Omega$, and the following holds in $\mathcal{S}'(\mathbb{R}^d)$,
$$u(t\wedge \tau)=u_0+\int_0^{t\wedge\tau}[\Delta u-\mathbf{P}div(u\otimes u)] ds+\int_0^{t\wedge\tau}\mathbf{P}f(t,u)dW_H~~\mathbb{P}-a.s.$$
\end{itemize}
for all $0\leq t<\infty$.

$(2)$ We say that the local strong solution is unique (or indistinguishable) if, given any pair $(u^{(1)},\tau^{(1)}),(u^{(2)},\tau^{(2)})$ of
local strong solutions,
\begin{equation}
\mathbb{P}(1_{u^{(1)}_0=u^{(2)}_0}(u^{(1)}(t)-u^{(2)}(t))=0;\forall t\in [0,\tau^{(1)}\wedge\tau^{(2)}])=1.\label{3.4}
\end{equation}

\end{defn}

Now we state the main results of this paper. The first result concerns the existence and uniqueness of the local strong solution.
\begin{thm}[Local existence of uniqueness solutions]
\label{C2}
Fix a stochastic basis $\mathcal{S}:=(\Omega,\mathscr{F},\mathbb{P},\{\mathscr{F}_t\}_{t\geq0},W_H)$. Assume that  $d\geq 2$, $p$ and $r$ satisfy  (\ref{pr}),
 $u_0\in L^r(\Omega;\dot{B}_{p,r}^{\frac{d}{p}-1})$  is $\mathscr{F}_0$-measurable, f satisfies
\begin{equation}
\label{3.5}
\|\mathbf{P}f(t,u)\|_{\dot{\mathbb{B}}_{p,r}^{\frac{d}{p}-2+\frac{2}{r}}}^r\leq
 \beta_1(t,\|u\|_{\dot{B}_{p,r}^{\frac{d}{p}-1}}
)+\gamma(t)\|u\|_{\dot{{B}}_{p,r}^{\frac{d}{p}-1+\frac{2}{r}}}^r,
\end{equation}
\begin{equation}
\label{3.6}
\|\mathbf{P}f(t,u)-\mathbf{P}f(t,v)\|^r_{\dot{\mathbb{B}}_{p,r}^{\frac{d}{p}-2+\frac{2}{r}}}\leq \beta_2(t,\|(u,v)\|_{\dot{B}_{p,r}^{\frac{d}{p}-1}})\|u-v\|_{\dot{{B}}_{p,r}^{\frac{d}{p}-1}}^r
+\gamma(t)\|u-v\|_{\dot{{B}}_{p,r}^{\frac{d}{p}-1+\frac{2}{r}}}^r,
\end{equation}
 where $\|\beta_i(\cdot,x)\|_{L^\infty}(i=1,2)$ is an increasing and locally bounded function of $x$, $\|\gamma(t)\|_{L^\infty}$ sufficiently small.
Then there exists a unique  local strong solution $(u,\tau)$ of (\ref{1.1}) in the sense of Definition \ref{C1}.
Moreover,
\begin{equation}
\label{3.7}
\mathbb{P}(\tau>0)=1.
\end{equation}
\end{thm}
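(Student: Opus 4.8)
The plan is to prove Theorem~\ref{C2} by a fixed-point (contraction mapping) argument applied to a suitably \emph{truncated} version of the stochastic Navier--Stokes system, followed by a patching argument that recovers the solution of the original equation up to a stopping time. First I would set up the mild (integral) formulation of \eqref{3.1}, writing any candidate solution as
\begin{equation}
\label{mild-sketch}
u(t)=e^{t\Delta}u_0-\int_0^t e^{(t-s)\Delta}\mathbf{P}\mathrm{div}(u\otimes u)\,ds+\int_0^t e^{(t-s)\Delta}\mathbf{P}f(s,u)\,dW_H.
\end{equation}
To run a genuine contraction on a small time interval I would work in the Banach space $E_T:=L^r(\Omega;\mathcal{L}_T^\infty(\dot B_{p,r}^{\frac dp-1}))\cap L^r(\Omega;\mathcal{L}_T^r(\dot B_{p,r}^{\frac dp-1+\frac2r}))$ suggested by the solution class in Definition~\ref{C1}, and define the solution map $\Phi$ sending $v\mapsto u$ via the right-hand side of \eqref{mild-sketch}. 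The choice of the index regime \eqref{pr} is precisely what makes the nonlinear term land back in $E_T$.

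The three terms in $\Phi$ must each be controlled. For the linear heat semigroup acting on $u_0$, smoothing estimates in the Chemin--Lerner spaces give $\|e^{t\Delta}u_0\|_{\mathcal{L}_T^r(\dot B_{p,r}^{d/p-1+2/r})}\lesssim\|u_0\|_{\dot B_{p,r}^{d/p-1}}$ together with the decay $\|e^{t\Delta}u_0\|_{\mathcal{L}_T^\infty(\dot B_{p,r}^{d/p-1})}\le\|u_0\|_{\dot B_{p,r}^{d/p-1}}$. For the bilinear term one uses paraproduct/Bony decomposition together with the product laws in critical Besov spaces to obtain a bound of the form $\|B(u,v)\|_{E_T}\lesssim\|u\|_{E_T}\|v\|_{E_T}$ with a constant that can be made small by shrinking $T$; here the two-parameter structure of the Chemin--Lerner norm is essential because the natural product estimate requires the low-low, low-high and high-high interactions to distribute the regularity $\frac{d}{p}-1+\frac{2}{r}$ in a summable way. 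For the stochastic convolution I would combine the It\^o isomorphism from Proposition~\ref{BGD}, or rather its one-sided consequence \eqref{2.2}, with the structural hypotheses \eqref{3.5} and \eqref{3.6} on $f$; the linear-growth part controlled by $\gamma(t)$ feeds back into the $\|u\|_{\dot B_{p,r}^{d/p-1+2/r}}^r$ norm, and this is exactly why $\|\gamma(t)\|_{L^\infty}$ must be assumed sufficiently small---it must be absorbed by the left-hand side rather than by shrinking $T$, since stochastic terms do not gain smallness from the time interval as reliably as deterministic convolutions do.

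Because the nonlinearity and the noise coefficient only satisfy \emph{local} (rather than global) Lipschitz and growth bounds---the functions $\beta_i(t,\cdot)$ are merely increasing and locally bounded in their second argument---the raw map $\Phi$ is not a global contraction. To repair this I would truncate: multiply the nonlinear and stochastic terms by a smooth cutoff $\theta_N(\|v\|_{\dot B_{p,r}^{d/p-1}})$ (this is the mechanism behind the stopping times $\varrho_N$ and $\sigma$ and the radius $R$ of \eqref{4.14} in Definition~\ref{C1}), producing a globally Lipschitz truncated system to which the Banach fixed-point theorem applies on $E_T$, yielding a unique global-in-time solution $u^{(N)}$ of the modified equation---this is the content announced for Section~\ref{sec:4}. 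I would then define $\tau_N=\min\{\sigma,\varrho_N\}$ and observe that on $[0,\tau_N)$ the cutoff is inactive, so $u^{(N)}$ solves the genuine equation there; consistency of the $u^{(N)}$ for different $N$ gives a well-defined limit solution on $[0,\tau)$ with $\tau=\lim_N\tau_N$. Uniqueness in the sense of \eqref{3.4} follows by applying the contraction estimate to the difference of two solutions, using \eqref{3.6} and a stopping-time localization so that the local constants are controlled.

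The positivity statement \eqref{3.7}, namely $\mathbb{P}(\tau>0)=1$, I would obtain by showing that the fixed point exists on a random but almost-surely positive time interval: since $u_0\in L^r(\Omega;\dot B_{p,r}^{d/p-1})$ one has $\|u_0\|_{\dot B_{p,r}^{d/p-1}}<\infty$ almost surely, and on the event $\{\|u_0\|_{\dot B_{p,r}^{d/p-1}}\le M\}$, whose probability tends to $1$ as $M\to\infty$, the heat-flow and stochastic-convolution estimates guarantee that the solution stays below the threshold $R$ for a deterministic time $T(M)>0$, forcing $\tau\ge T(M)>0$ on that event. I expect the main obstacle to be the \emph{interplay between the stochastic term and the critical scaling}: unlike the deterministic bilinear term, the stochastic convolution governed by \eqref{2.2} does not automatically acquire a small factor from a short time interval, so the contraction must be closed by carefully splitting the $f$-estimate into a part absorbed through the smallness of $\|\gamma\|_{L^\infty}$ and a part handled by the truncation, while simultaneously ensuring that the resulting solution has the right pathwise continuity $u\in C([0,\tau);\dot B_{p,r}^{d/p-1})$ demanded by Definition~\ref{C1}; verifying this time-continuity at the critical regularity, where the heat semigroup is not strongly continuous on the full Besov scale, is the delicate technical point.
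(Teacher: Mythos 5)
Your overall architecture (truncate, contract, patch via stopping times) matches the paper's, but two of your key steps would fail as stated. First, the contraction mechanics for the bilinear term: you claim a bound $\|B(u,v)\|_{E_T}\lesssim\|u\|_{E_T}\|v\|_{E_T}$ ``with a constant that can be made small by shrinking $T$.'' At critical regularity this is false --- the constant in Lemma \ref{F3} is scale-invariant and $T$-uniform, and the norm $\|u\|_{L_T^r\dot B_{p,r}^{d/p-1+2/r}}$ of the unknown does not shrink with $T$ uniformly over the fixed-point ball (or over $\Omega$). Relatedly, your single cutoff $\theta_N(\|v\|_{\dot B_{p,r}^{d/p-1}})$ cannot repair this: a bound on the subcritical norm $\|v\|_{\dot B_{p,r}^{d/p-1}}$ gives no control whatsoever on the critical bilinear estimate, which needs the time-integrated norm $\|v\|_{L_t^r\dot B_{p,r}^{d/p-1+2/r}}$. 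This is exactly why the paper introduces \emph{two} cutoffs in \eqref{4.3}: $\chi_{1,u}=\theta_1(\|u\|_{L_t^r\dot B_{p,r}^{d/p-1+2/r}})$ with a \emph{small} threshold $R=(4C^*)^{-1/r}$ from \eqref{4.14}, which forces $\|\chi_{1,u}u\|_{L_T^r\dot B_{p,r}^{d/p-1+2/r}}\le 2R$ and produces the factor $R^r$ in \eqref{4.9} that closes the contraction, and $\chi_{2,u}=\theta_{2,N}(\|u\|_{\dot B_{p,r}^{d/p-1}})$, whose only role is to cap the argument of the locally bounded $\beta_i$ so that the $\beta$-contributions become $O(T)$ via $\|\beta_1(\cdot,N+1)\|_{L_T^1}\le T\|\beta_1(\cdot,N+1)\|_{L^\infty}$ (so, contrary to your remark, the noise term \emph{does} gain smallness from $T$ in its $\beta$-part; only the $\gamma$-part must be absorbed by the smallness assumption, which you identified correctly). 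With both cutoffs the modified system is solvable on an interval $\hat T$ independent of $u_0$, hence globally by iteration --- your truncated system, lacking $\chi_1$, admits no such contraction.

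Second, your proof of \eqref{3.7} is broken at the same critical-scaling point: you assert that on $\{\|u_0\|_{\dot B_{p,r}^{d/p-1}}\le M\}$ the solution stays below the threshold $R$ for a \emph{deterministic} time $T(M)$. In critical Besov spaces the local existence time is not a function of the norm of the data --- $\|e^{t\Delta}u_0\|_{L_T^r\dot B_{p,r}^{d/p-1+2/r}}$ becomes small as $T\to0$ only by a profile-dependent (dominated-convergence) argument, not uniformly on a norm ball, which is precisely what makes these spaces ``critical.'' The paper circumvents this by working in expectation: since $u_0\in L^r(\Omega;\dot B_{p,r}^{d/p-1})$, one chooses $J_{\varepsilon,u_0}$ so that the high-frequency tail of $\mathbb{E}\sum_j 2^{jr(d/p-1)}\|\Delta_j u_0\|_{L^p}^r$ is small, estimates the low frequencies by $k^{-1}2^{2J_{\varepsilon,u_0}}\mathbb{E}\|u_0\|_{\dot B_{p,r}^{d/p-1}}^r$ as in \eqref{5.12}, and concludes $\mathbb{P}(\tau_N\le 1/k)\le\varepsilon$ via Chebyshev's inequality \eqref{5.11}. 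Your route could only be salvaged by replacing the deterministic $T(M)$ with such a frequency-splitting argument (or by noting that $t\mapsto\|u\|_{L_t^r\dot B_{p,r}^{d/p-1+2/r}}$ is a.s.\ continuous and vanishes at $t=0$, so $\sigma>0$ a.s.), but as written the step is invalid. The remaining ingredients of your sketch --- mild formulation, the space $E_T$, Proposition \ref{F5} via \eqref{2.2} for the stochastic convolution, consistency and monotonicity of the $\tau_N$, and uniqueness via \eqref{3.6} with Gr\"onwall after localization --- do track the paper's proof.
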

Assume further that $\beta_1$ has special structure,  $\beta_1(t,x)=\eta(t)x^r$, then we have the global
existence in probability as follows
\begin{thm}
\label{C3}
Under the assumptions of Theorem \ref{C2}, if
$\beta_1(t,x)=\eta(t)x^r$ with $\|\eta(t)\|_{L^\infty}$ sufficiently small.
Then for any $\epsilon>0$, there exists some $\delta>0$ such that if
\begin{equation}
\mathbb{E}(\|u_0\|_{\dot{B}_{p,r}^{\frac{d}{p}-1}}^r)<\delta,
\end{equation}
then $u$ is a unique solution of (\ref{1.1}) on $[0,\tau)$ and $\tau$ satisfies
\begin{equation}
\label{3.9}
\mathbb{P}(\{\tau=\infty\})>1-\epsilon.
\end{equation}
\end{thm}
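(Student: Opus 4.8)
The plan is to promote the local solution of Theorem~\ref{C2} to a global one on an event whose probability exceeds $1-\epsilon$. The guiding idea is that the stopping time $\sigma$ is triggered precisely when the space-time norm $\|u\|_{L^r_t\dot{B}_{p,r}^{\frac{d}{p}-1+\frac{2}{r}}}$ reaches the threshold $R$; hence $\{\sigma=\infty\}$ forces global existence, and it suffices to bound the $r$-th moment of this norm by the (small) initial datum and then apply Markov's inequality.

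For the a priori estimate I would work with the stopped process and the Duhamel form of (\ref{3.1}),
$$u(t)=e^{t\Delta}u_0-\int_0^t e^{(t-s)\Delta}\mathbf{P}\,\mathrm{div}(u\otimes u)\,ds+\int_0^t e^{(t-s)\Delta}\mathbf{P}f(s,u)\,dW_H,$$
estimating $\mathbb{E}\|u\|_{L^r_{t\wedge\sigma}\dot{B}_{p,r}^{\frac{d}{p}-1+\frac{2}{r}}}^r$. The free evolution is controlled by the (time-global) maximal regularity of the heat semigroup; the bilinear drift by the critical product and paraproduct estimates in $\dot{B}_{p,r}$, which produce a bound of the form $\|u\|_{L^r\dot{B}^{\cdots+2/r}}^2$; and the stochastic convolution by Proposition~\ref{BGD} together with (\ref{2.2}) and stochastic maximal regularity, reducing its $r$-th moment to $\mathbb{E}\int_0^{t\wedge\sigma}\|\mathbf{P}f(s,u)\|_{\dot{\mathbb{B}}_{p,r}^{\frac{d}{p}-2+\frac{2}{r}}}^r\,ds$, to which the growth bound (\ref{3.5}) with $\beta_1(t,x)=\eta(t)x^r$ applies.

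Writing $X(t):=\mathbb{E}\|u\|_{L^r_{t\wedge\sigma}\dot{B}_{p,r}^{\frac{d}{p}-1+\frac{2}{r}}}^r$, the preceding bounds combine into an inequality schematically of the form $X(t)\leq C\mathbb{E}\|u_0\|_{\dot{B}_{p,r}^{\frac{d}{p}-1}}^r+C(R^{1/r}+\|\gamma\|_{L^\infty})X(t)+\|\eta\|_{L^\infty}\,\mathbb{E}\int_0^{t\wedge\sigma}\|u\|_{\dot{B}_{p,r}^{\frac{d}{p}-1}}^r\,ds$, where the factor $R^{1/r}$ comes from the running space-time bound on $[0,\sigma)$. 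Fixing $R$ (as in (\ref{4.14})) and $\|\gamma\|_{L^\infty}$ small absorbs the quadratic drift and the top-order part of the noise into the left-hand side. The remaining lower-order contribution $\|\eta\|_{L^\infty}\int\|u\|_{\dot{B}_{p,r}^{\frac{d}{p}-1}}^r$---the destabilizing effect of the multiplicative noise---I would control through a stochastic Gronwall argument (equivalently, an exponential-supermartingale estimate on the associated martingale part), using that $\|\eta\|_{L^\infty}$ is small to obtain a bound $X(t)\leq C'\mathbb{E}\|u_0\|_{\dot{B}_{p,r}^{\frac{d}{p}-1}}^r\leq C'\delta$ that is uniform in $t$. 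On $\{\sigma<\infty\}$ the space-time norm attains $R$, so Markov's inequality yields $\mathbb{P}(\sigma<\infty)\leq C'\delta/R$; choosing $\delta:=\epsilon R/C'$ makes this below $\epsilon$. Since a finite space-time bound rules out blow-up of $\|u\|_{\dot{B}_{p,r}^{\frac{d}{p}-1}}$, on $\{\sigma=\infty\}$ one has $\varrho_N\to\infty$ and hence $\tau=\lim_N\tau_N=\infty$; therefore $\mathbb{P}(\tau=\infty)\geq\mathbb{P}(\sigma=\infty)>1-\epsilon$, which is (\ref{3.9}).

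The main obstacle will be closing the a priori estimate uniformly in time at critical regularity. The scaling leaves no slack, so the heat smoothing, the bilinear estimate, and the stochastic estimate of Proposition~\ref{BGD} must be aligned on the single norm $\|\cdot\|_{L^r\dot{B}_{p,r}^{\frac{d}{p}-1+\frac{2}{r}}}$ with all constants tracked so that the net coefficient of $X$ stays strictly below one. The genuinely delicate point is the lower-order noise term $\|\eta\|_{L^\infty}\int\|u\|_{\dot{B}_{p,r}^{\frac{d}{p}-1}}^r$: because homogeneous Besov spaces of different regularity do not embed, it cannot be absorbed directly into the dissipation, and it is here that the special structure $\beta_1(t,x)=\eta(t)x^r$, the smallness of $\|\eta\|_{L^\infty}$, and a stochastic-Gronwall/martingale-concentration step are indispensable for converting the (possibly non-uniform) moment growth into a uniform probability bound.
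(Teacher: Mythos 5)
Your overall architecture coincides with the paper's: a moment estimate on the process stopped at $\sigma$, absorption of the bilinear term and the top-order noise contribution via the smallness of $R$ from (\ref{4.14}) and of $\|\gamma\|_{L^\infty}$ from (\ref{gamma}), a Chebyshev-type bound for $\mathbb{P}(\sigma<\infty)$ leading to $\delta\sim\epsilon R^r$, and the reduction $\{\sigma=\infty\}\subset\{\tau=\infty\}$. However, the step you yourself single out as delicate---the lower-order noise term $\int_0^{t}\eta(s)\|u(s)\|^r_{\dot{B}_{p,r}^{\frac{d}{p}-1}}\,ds$---is genuinely gapped as you propose to handle it. A stochastic Gronwall or exponential-supermartingale argument with only $\|\eta\|_{L^\infty}$ small cannot yield a bound uniform in $t$ on an infinite horizon: Gronwall produces a factor $e^{C\|\eta\|_{L^\infty}t}$, and, as you correctly observe, there is no dissipative absorption between homogeneous Besov spaces of different regularity, so nothing compensates the exponential growth. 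Consequently your claimed uniform bound $X(t)\leq C'\delta$ does not follow, and with it the choice of $\delta$ and the final Markov step collapse.

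The paper's proof (Lemma \ref{E1}) avoids Gronwall entirely: it keeps $\mathbb{E}\sup_{0\leq s\leq t}\|u(s\wedge\sigma\wedge\tau)\|^r_{\dot{B}_{p,r}^{\frac{d}{p}-1}}$ on the left-hand side and bounds the noise contribution by $\int_0^{t\wedge\sigma\wedge\tau}\eta(s)\|u(s)\|^r_{\dot{B}_{p,r}^{\frac{d}{p}-1}}\,ds\leq\|\eta\|_{L^1}\sup_{0\leq s\leq t}\|u(s\wedge\sigma\wedge\tau)\|^r_{\dot{B}_{p,r}^{\frac{d}{p}-1}}$, so that it is absorbed directly once $C^*\|\eta\|_{L^1}<1$; note the proof really uses smallness of $\|\eta\|_{L^1(0,\infty)}$, i.e.\ integrated-in-time smallness, which is precisely what your $L^\infty$ hypothesis fails to supply. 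This yields $\sup_{t}\mathbb{E}\|u(t\wedge\sigma\wedge\tau)\|^r_{\dot{B}_{p,r}^{\frac{d}{p}-1}}\leq\frac{C^*}{1-C^*\|\eta\|_{L^1}}\mathbb{E}\|u_0\|^r_{\dot{B}_{p,r}^{\frac{d}{p}-1}}$, uniformly in $t$ for free; Chebyshev in $N$ then gives $\mathbb{P}(\tau_N\leq\sigma)\lesssim N^{-r}$ and hence $\mathbb{P}(\tau\leq\sigma)=0$, which is the quantitative substance behind your unproved assertion that a finite space-time bound rules out blow-up of $\|u\|_{\dot{B}_{p,r}^{\frac{d}{p}-1}}$. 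Finally, the paper applies Fatou's lemma together with the identity $\int_0^{\sigma}\|u\|^r_{\dot{B}_{p,r}^{\frac{d}{p}-1+\frac{2}{r}}}\,ds=R^r$ on $\{\sigma<\infty\}$ to obtain $\mathbb{P}(\sigma<\infty)\leq 2C^*R^{-r}\mathbb{E}\|u_0\|^r_{\dot{B}_{p,r}^{\frac{d}{p}-1}}$ and chooses $\delta=\epsilon R^r/(2C^*)$; relatedly, your exponents are off in two places: the coefficient of $X$ should carry $R^{r}$, not $R^{1/r}$, and the Markov bound should read $C'\delta/R^{r}$, not $C'\delta/R$, since all norms enter raised to the $r$-th power.
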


\begin{rem}
When $d=2$, there are many works studying the global existence and uniqueness of the solution in bounded or unbounded domain, see e.g.
  \cite{brzezniak2013existence,flandoli1995martingale}. Otherwise, for $d\geq3$, it is an open problem even in the deterministic case.
\end{rem}

\begin{rem}
We point out that our conditions of the noise $f(t,u)$ is more general than the linear map of $u$. After suitable modifying, we cover the result of   \cite{sango2016harmonic}, which consider the MHD equations with the additive noise in $\dot{B}_{2,r}^{\frac{d}{2}-1}$. However, when $p>d$, one may take the initial velocity in a Besov space with a \textit{negative} index of regularity, so that a highly oscillating \lq\lq large \rq\rq initial velocity $u_0^\epsilon$ in (\ref{ep}) may give rise to a unique global solution.
\end{rem}
We shall prove Theorems \ref{C2} and \ref{C3} in several steps. In the deterministic setting, the system (\ref{1.1}) can be solved by means of the contraction mapping argument. The so-obtained local in time strong solution exists on a maximal time interval, the length of which can be estimated in terms of norms of the initial data and the external force. However, in the stochastic setting, the lower bounded of the lifespan is  a random variable. Therefore, we  work to the modified system (\ref{4.3}) by introducing two cut-off functions $\theta_1(t)$ and
$\theta_{2,N}(t)$ which  control $\|u\|_{L_t^r\dot{B}_{p,r}^{\frac{d}{p}-1+\frac{2}{r}}}$ and  $\|u\|_{L^\infty_t\dot{B}_{p,r}^{\frac{d}{p}-1}}$,
respectively. According to the bilinear estimate in Lemma \ref{F4}, the heat flow smooth effect in Proposition \ref{F5} , we prove that the mapping (\ref{4.6}), the corresponding mapping of the system (\ref{4.3}), is  a contraction mapping. By the classical Banach fixed point Theorem, we obtain the
  global solutions of (\ref{4.3}).

Then we drop the cut-off function after defining a suitable stopping time, and prove the  existence and uniqueness of the local strong solution for the system (\ref{1.1}) in the sense of Definition \ref{C1}. Finally ,we can estimate the low bound of stopping time  and finish the proof of Theorems
\ref{C2} and \ref{C3} by Chebyshev's inequality and time-space estimates.

In Appendix, we utilize \lq\lq It\^{o} isomorphism\rq\rq (\ref{2.2}),
Da Prato-Kwapi\'{e}n-Zabczyk factorization argument and stochastic Fubini theorem to prove the heat flow smooth effect in stochastic case in Proposition \ref{F5}. It is the particular case of Theorem 3.5 in    \cite{van2012maximal}, but the proof is more simple. For the special case of Proposition \ref{F5} in $\dot{B}_{2,r}^s$, we would refer the reader to   \cite{sango2016harmonic}.

\section{Existence of solution to the stochastic modified equations}
\label{sec:4}
Fix $0<R<1$, which to be determined in (\ref{4.14}), construct two continuous nonincreasing functions $\theta_1:[0,\infty)\mapsto[0,1]$
\begin{equation}
\label{4.1}
\theta_1(x)=\left\{\begin{array}{l}
1,~~~~~~~~~~~~\text{for}~~x<R,\\
2-\frac{x}{R},~~~~~~\text{for}~~R\leq x\leq 2R,\\
0,~~~~~~~~~~~~\text{for}~~x>2R,
\end{array}
\right.
\end{equation}
and $\theta_{2,N}[0,\infty):\mapsto[0,1]$
\begin{equation}
\label{4.2}
\theta_{2,N}(x)=\left\{\begin{array}{l}
1,~~~~~~~~~~~~~~~~~~~\text{for}~~x<N,\\
N+1-x,~~~~~~~\text{for}~~N\leq x\leq N+1,\\
0,~~~~~~~~~~~~~~~~~~~\text{for}~~x>N+1.
\end{array}
\right.
\end{equation}
We consider the following Cauchy problem
\begin{equation}
\label{4.3}
\left\{\begin{array}{l}
du+(\mathbf{P}\text{div}(\chi_{1,u}u\otimes u)-\Delta u)dt=\mathbf{P}\chi_{1,u}\chi_{2,u}f(t,u)dW_H,\\
u|_{t=0}=u_0,
\end{array}
\right.
\end{equation}
where $\chi_{1,u}(t):=\theta_1\big(\|u\|_{L_t^r\dot{B}_{p,r}^{\frac{d}{p}-1+\frac{2}{r}}}\big)$ and $\chi_{2,u}(t):=
\theta_{2,N}\big(\|u\|_{\dot{B}_{p,r}^{\frac{d}{p}-1}}\big)$.

\begin{defn}
A process $u$ is called a global strong solution of $(\ref{4.3})$ if it satisfies the following two conditions:

\begin{itemize}
\item $u$ is right continuous progressively measurable progress and $u\in L^r(\Omega;\mathcal{L}^\infty_{loc}([0,\infty);\dot{B}_{p,r}^{\frac{d}{p}-1}))
\cap L^r_{loc}(\Omega;L^r([0,\infty);\dot{B}_{p,r}^{\frac{d}{p}-1+\frac{2}{r}})).$

\item $u(t,x)\in C([0,\infty);\dot{B}_{p,r}^{\frac{d}{p}-1})$ for almost all $\omega\in\Omega$, and the following identity holds in $\mathcal{S}'(\mathbb{R}^d)$,
$$u(t)=u_0+\int_0^t[\Delta u-\mathbf{P}div(\chi_{1,u}u\otimes u)] ds
+\int_0^t\mathbf{P}\chi_{1,u}\chi_{2,u}f(t,u)dW_H~~\mathbb{P}-a.s.$$
for all $0\leq t<\infty$.
\end{itemize}
\end{defn}

To solve $(\ref{4.3})$ is equivalent to solve the following equation
$$u(t)=e^{t\Delta}u_0+B(\chi_{1,u}u,u)+F_{\chi_{1,u}\chi_{2,u}f}(u),$$
where $B(u,v)(t) $ and $G(u,v)(t)$ are the solution to the heat equation
\begin{equation}
\left\{\begin{split}
\partial_t B(u,v)-\Delta B(u,v)&=\mathbf{P}div(u\otimes v),\\
B(u,v)|_{t=0}&=0,
\end{split}
\right.
\end{equation}
and stochastic heat equation
\begin{equation}
\left\{\begin{aligned}
dF_f(u)-\Delta F_f(u)dt&=\mathbf{P}f(t,u)dW_H,\\
F_{f}(u)|_{t=0}&=0,
\end{aligned}
\right.
\end{equation}
respectively.

\begin{prop}
\label{jieduancunai}
There exists a  global strong solution to the system (\ref{4.3}).
\end{prop}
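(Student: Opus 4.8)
The plan is to solve the truncated system (\ref{4.3}) through its mild (integral) formulation (\ref{4.6}) by the Banach fixed point theorem. I would work in the complete space $\mathcal{E}_T$ of right-continuous, $\mathscr{F}$-adapted, progressively measurable processes $u$ on $[0,T]\times\Omega$ for which
$$\|u\|_{\mathcal{E}_T}^r:=\mathbb{E}\|u\|_{\mathcal{L}_T^\infty(\dot{B}_{p,r}^{\frac{d}{p}-1})}^r+\mathbb{E}\|u\|_{L_T^r(\dot{B}_{p,r}^{\frac{d}{p}-1+\frac{2}{r}})}^r<\infty,$$
and set $\Phi(u):=e^{t\Delta}u_0+B(\chi_{1,u}u,u)+F_{\chi_{1,u}\chi_{2,u}f}(u)$. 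The goal is to show that $\Phi$ maps a closed ball of $\mathcal{E}_T$ into itself and is a strict contraction there, so that the fixed point is the desired global solution.

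The three summands are estimated separately. For the free evolution, heat smoothing in Besov spaces yields $\|e^{t\Delta}u_0\|_{\mathcal{E}_T}\lesssim(\mathbb{E}\|u_0\|_{\dot{B}_{p,r}^{\frac{d}{p}-1}}^r)^{1/r}$, which is finite by hypothesis. For the bilinear term I would combine the bilinear estimate of Lemma \ref{F4} with the deterministic heat-smoothing of Proposition \ref{F5} to get $\|B(\chi_{1,u}u,u)\|_{\mathcal{E}_T}\lesssim\|\chi_{1,u}u\|_{L_T^r(\dot{B}_{p,r}^{\frac{d}{p}-1+\frac{2}{r}})}\|u\|_{L_T^r(\dot{B}_{p,r}^{\frac{d}{p}-1+\frac{2}{r}})}$. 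The cut-off $\chi_{1,u}(t)=\theta_1(\|u\|_{L_t^r\dot{B}_{p,r}^{\frac{d}{p}-1+\frac{2}{r}}})$ from (\ref{4.1}) vanishes once the accumulated parabolic norm exceeds $2R$, so $\chi_{1,u}u$ has parabolic size $\lesssim R$; choosing $R$ small (the value fixed in (\ref{4.14})) renders this term a contraction with small constant. For the corresponding difference I would split $\chi_{1,u}u-\chi_{1,v}v=\chi_{1,u}(u-v)+(\chi_{1,u}-\chi_{1,v})v$, estimating the first factor by the support bound $\lesssim R$ and the second by the Lipschitz continuity of $\theta_1$ together with the same support confinement.

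For the stochastic convolution I would invoke the stochastic heat-smoothing of Proposition \ref{F5} and the It\^o-type isomorphism (\ref{2.2}) to bound $\mathbb{E}\|F_{\chi_{1,u}\chi_{2,u}f}(u)\|_{\mathcal{E}_T}^r$ by $C\,\mathbb{E}\|\chi_{1,u}\chi_{2,u}\mathbf{P}f(\cdot,u)\|_{L_T^r(\dot{\mathbb{B}}_{p,r}^{\frac{d}{p}-2+\frac{2}{r}})}^r$. Inserting the growth condition (\ref{3.5}), the $\beta_1$-contribution stays bounded because $\chi_{2,u}=\theta_{2,N}(\|u\|_{\dot{B}_{p,r}^{\frac{d}{p}-1}})$ from (\ref{4.2}) confines $\|u\|_{\dot{B}_{p,r}^{\frac{d}{p}-1}}$ to $\le N+1$ on its support while $\|\beta_1(\cdot,x)\|_{L^\infty}$ is locally bounded, and the $\gamma$-contribution is small by the smallness of $\|\gamma\|_{L^\infty}$. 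For the difference I would use (\ref{3.6}): the $\gamma$-term again yields a small multiple of $\|u-v\|_{L_T^r(\dot{B}_{p,r}^{\frac{d}{p}-1+\frac{2}{r}})}$, while the differences of the cut-offs are absorbed by the Lipschitz bounds for $\theta_1,\theta_{2,N}$ and their support restrictions.

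The main obstacle is the $\beta_2$-term in (\ref{3.6}): it is measured in the weaker norm $\dot{B}_{p,r}^{\frac{d}{p}-1}$ with a coefficient that the cut-off $\chi_{2,N}$ only makes bounded, not small, so it cannot be absorbed by the smallness of $R$ or of $\|\gamma\|_{L^\infty}$. I expect to control it by a Gronwall-type argument: the factor it produces is proportional to the length of the time interval, so $\Phi$ is a strict contraction on $[0,T_0]$ for $T_0$ sufficiently small, uniformly in the data because the cut-offs bound all coefficients; equivalently one may introduce an exponential time weight into $\|\cdot\|_{\mathcal{E}_T}$ and take the weight large to obtain a contraction constant below $1$ independent of $T$. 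Since the cut-offs furnish uniform a priori bounds, no finite-time blow-up occurs and the local fixed points extend to a solution on $[0,\infty)$. Finally, adaptedness and right-continuous progressive measurability are preserved by $\Phi$, and the path continuity $u\in C([0,\infty);\dot{B}_{p,r}^{\frac{d}{p}-1})$ follows from the continuity of the heat semigroup, of the deterministic bilinear convolution, and of the stochastic convolution provided by Proposition \ref{F5}, which confirms that the fixed point is a global strong solution of (\ref{4.3}).
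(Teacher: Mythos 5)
Your proposal follows essentially the same route as the paper's proof: the same mild formulation $K(u)=e^{t\Delta}u_0+B(\chi_{1,u}u,u)+F_{\chi_{1,u}\chi_{2,u}f}(u)$, the same Banach space with norm $\mathbb{E}\|\cdot\|^r_{\mathcal{L}^\infty_T\dot B^{\frac{d}{p}-1}_{p,r}}+\mathbb{E}\|\cdot\|^r_{L^r_T\dot B^{\frac{d}{p}-1+\frac{2}{r}}_{p,r}}$, smallness supplied by $R$ and $\|\gamma\|_{L^\infty}$, Lipschitz-plus-support control of the cut-offs for the differences, the time factor $T$ to absorb the $\beta_2$-term, and crucially the observation that the contraction time $\hat T$ depends only on $N$, $R$, $\beta_1$, $\beta_2$ and not on $u_0$, so the local fixed point iterates to a global solution --- exactly as in the paper. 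The only blemishes are cosmetic: the bilinear estimate is Lemma \ref{F1}/Lemma \ref{F3} and the deterministic smoothing is Lemma \ref{F2} (Lemma \ref{F4} is the semigroup-on-annulus bound and Proposition \ref{F5} the stochastic smoothing), and the paper implements your splitting of $\chi_{1,u}u\otimes u-\chi_{1,v}v\otimes v$ via an explicit three-case analysis so that each cut-off is always paired with the field whose norm it confines.
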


\begin{proof}
Define
\begin{equation}
\label{4.6}
K(u)=e^{t\Delta}u_0+B(\chi_{1,u}u,u)+F_{\chi_{1,u}\chi_{2,u}f}(u).
\end{equation}
For any $T \geq 0$. By (\ref{3.5}), Lemmas \ref{F2}--\ref{F3}
and
Proposition \ref{F5}, we have
\begin{eqnarray}
\label{4.7}
&&\mathbb{E}\|K(u)\|^r_{\mathcal{L}^\infty_{{T}}\dot{B}_{p,r}^{\frac{d}{p}-1}}+
\mathbb{E}\|K(u)\|^r_{{L}^r_{{T}}\dot{B}_{p,r}^{\frac{d}{p}-1+\frac{2}{r}}}\nonumber\\
&\leq& C\big( \mathbb{E}\|u_0\|^r_{\dot{B}_{p,r}^{\frac{d}{p}-1}}
+\mathbb{E}\|\chi_{1,u}u\|^r_{{L}^r_{{T}}\dot{B}_{p,r}^{\frac{d}{p}-1+\frac{2}{r}}}
\|u\|^r_{{L}^r_{T}\dot{B}_{p,r}^{\frac{d}{p}-1+\frac{2}{r}}}
 +\mathbb{E}\|\mathbf{P}\chi_{1,u}\chi_{2,u}f(t,u)\|^r_{{L}^r_{{T}}\dot{\mathbb{B}}_{p,r}^{\frac{d}{p}-2+\frac{2}{r}}}\big)\nonumber\\
&\leq& C\big(\mathbb{E}\|u_0\|^r_{\dot{B}_{p,r}^{\frac{d}{p}-1}}+R^r\|u\|_{L_T^r\dot{B}_{p,r}^{\frac{d}{p}-1+\frac{2}{r}}}^r
+\|\beta_1(t,N+1)\|_{L^1_T}+
\|\gamma\|_{L^\infty}\|u\|_{L_T^r\dot{B}_{p,r}^{\frac{d}{p}-1+\frac{2}{r}}}^r\big).
\end{eqnarray}
Considering $K(u)-K(v)$, we have
\begin{eqnarray}
K(u)-K(v)&=&\big(B(\chi_{1,u}u,u)-B(\chi_{1,v}v,v)\big)
+\big(F_{\chi_{1,u}\chi_{2,u}f}(u)-F_{\chi_{1,v}\chi_{2,v}f}(v)\big)\nonumber\\
&=&I_1+I_2.
\end{eqnarray}
To estimate $I_1$, we divide into following three cases:

\noindent$(1)$ $\chi_{1,u}(t)>0,\chi_{1,v}(t)>0$,
$$
\chi_{1,u}u\otimes u - \chi_{1,v}v\otimes v 
=(\chi_{1,u}-\chi_{1,v})u\otimes u + \chi_{1,v}(u-v)\otimes u + \chi_{1,v}v\otimes (u-v);
$$
$(2)$ $\chi_{1,u}(t)>0,\chi_{1,v}(t)=0$,
$$
\chi_{1,u}u\otimes u - \chi_{1,v}v\otimes v =(\chi_{1,u}-\chi_{1,v})u\otimes u ;
$$
$(3)$ $\chi_{1,u}=0,\chi_{1,v}>0$,
$$
\chi_{1,u}u\otimes u - \chi_{1,v}v\otimes v = (\chi_{1,u}-\chi_{1,v})v\otimes v .
$$
Then   applying Lemma \ref{F3}, we obtain
\begin{eqnarray}
\label{4.9}
\|I_1\|_{\mathcal{L}_T^\infty\dot{B}_{p,r}^{\frac{d}{p}-1}}^r+\|I_1\|_{L_T^r\dot{B}_{p,r}^{\frac{d}{p}-1+\frac{2}{r}}}^r
&\leq& CR^{2r}\|\chi_{1,u}-\chi_{1,v}\|_{L_T^\infty}^r+CR^r\|u-v\|_{L_T^r\dot{B}_{p,r}^{\frac{d}{p}-1+\frac{2}{r}}}^r\nonumber\\
&\leq& CR^r\|u-v\|_{L_T^r\dot{B}_{p,r}^{\frac{d}{p}-1+\frac{2}{r}}}^r.
\end{eqnarray}
To estimate $I_2$, we also consider three cases:

\noindent$(1)$ $\chi_{1,u}\chi_{2,u}(t)>0,\chi_{1,v}\chi_{2,v}(t)>0,$
\begin{eqnarray*}
&&\textbf{P}\chi_{1,u}\chi_{2,u}f(t,u)-\textbf{P}\chi_{1,v}\chi_{2,v}f(t,v)\\
&=&(\chi_{1,u}-\chi_{1,v})\textbf{P}\chi_{2,u}f(t,u)+(\chi_{2,u}-\chi_{2,v})
\textbf{P}\chi_{1,v}f(t,u)
+\textbf{P}\chi_{1,v}\chi_{2,v}(f(t,u)-f(t,v)),
\end{eqnarray*}
$(2)$ $\chi_{1,u}\chi_{2,u}(t)>0,\chi_{1,v}\chi_{2,v}(t)=0,$
\begin{eqnarray*}
&&\textbf{P}\chi_{1,u}\chi_{2,u}f(t,u)-\textbf{P}\chi_{1,v}\chi_{2,v}f(t,v)\\
&=&(\chi_{1,u}-\chi_{1,v})\textbf{P}\chi_{2,u}f(t,u)+(\chi_{2,u}-\chi_{2,v})
\textbf{P}\chi_{1,v}f(t,u),
\end{eqnarray*}
$(3)$ $\chi_{1,u}\chi_{2,u}(t)=0,\chi_{1,v}\chi_{2,v}(t)>0,$
\begin{eqnarray*}
&&\textbf{P}\chi_{1,u}\chi_{2,u}f(t,u)-\textbf{P}\chi_{1,v}\chi_{2,v}f(t,v)\\
&=&(\chi_{1,u}-\chi_{1,v})\textbf{P}\chi_{2,u}f(t,v)+(\chi_{2,u}-\chi_{2,v})
\textbf{P}\chi_{1,v}f(t,v).
\end{eqnarray*}
Thanks to (\ref{3.5}), (\ref{3.6}) and Proposition \ref{F5}, we obtain
\begin{eqnarray}
\label{4.10}
&&\mathbb{E}\|I_2\|_{\mathcal{L}_T^\infty\dot{B}_{p,r}^{\frac{d}{p}-1}}^r
+\mathbb{E}\|I_2\|_{L_T^r\dot{B}_{p,r}^{\frac{d}{p}-1+\frac{2}{r}}}^r\nonumber\\
&\leq &C\big((||\chi_{1,u}-\chi_{1,v}||_{L_T^\infty}^r
+\|\chi_{2,u}-\chi_{2,v}\|_{L_T^\infty}^r)(\|\beta_1(\cdot,N+1)\|_{L_T^1}
+\|\gamma\|_{L^\infty}R^r)\nonumber\\
&&+\|\beta_2(\cdot,2N+2)\|_{L_T^1}\|u-v\|_{L_T^\infty\dot{B}_{p,r}^{\frac{d}{p}-1}}^r
+\|\gamma\|_{L^\infty}\|u-v\|_{L_T^r\dot{B}_{p,r}^{\frac{d}{p}-1+\frac{2}{r}}}^r\big)\nonumber\\
&\leq& C\big((\frac{1}{R^r}\|u-v\|_{L_T^r\dot{B}_{p,r}^{\frac{d}{p}-1+\frac{2}{r}}}^r+
\|u-v\|_{L_T^\infty\dot{B}_{p,r}^{\frac{d}{p}-1}}^r)(\|\beta_1(\cdot,N+1)\|_{L_T^1}
+\|\gamma\|_{L^\infty}R^r)\nonumber\\
&&+\|\beta_2(\cdot,2N+2)\|_{L_T^1}\|u-v\|_{L_T^\infty\dot{B}_{p,r}^{\frac{d}{p}-1}}^r
+\|\gamma\|_{L^\infty}\|u-v\|_{L_T^r\dot{B}_{p,r}^{\frac{d}{p}-1+\frac{2}{r}}}^r\big)\nonumber\\
&\leq &C\big(\big( T(\|\beta_1(\cdot,N+1)\|_{L^\infty}+\beta_2(\cdot,2N+2)\|_{L^\infty})
+\|\gamma\|_{L^\infty}R^r\big)\|u-v\|_{L_T^\infty\dot{B}_{p,r}^{\frac{d}{p}-1}}^r\nonumber\\
&&+\big(\frac{T\|\beta_1(\cdot,N+1)\|_{L^\infty}}{R^r}+\|\gamma\|_{L^\infty}\big)
\|u-v\|_{L_T^r\dot{B}_{p,r}^{\frac{d}{p}-1+\frac{2}{r}}}^r\big).
\end{eqnarray}
Set
\begin{eqnarray*}
S_T&=\bigg\{&w|w~\text{is progressively measurable,}\\
&&\text{and}~w\in L^r(\Omega;C([0,T];\dot{B}_{p,r}^{\frac{d}{p}-1}))
\cap L^r(\Omega;L^r(0,T;\dot{B}_{p,r}^{\frac{d}{p}-1+\frac{2}{r}}))\bigg\}
\end{eqnarray*}
with the norm $\|w\|_{S_T}^r=\mathbb{E}\|w\|_{\mathcal{L}^\infty_{{T}}\dot{B}_{p,r}^{\frac{d}{p}-1}}^r
+\mathbb{E}\|w\|_{L^r_T\dot{B}_{p,r}^{\frac{d}{p}-1+\frac{2}{r}}}^r$.
 Combining (\ref{4.7})-(\ref{4.10}) together, we obtain
\begin{equation}
\label{4.11}
\|K(u)\|_{S_{{T}}}^r\leq C^*\left(\mathbb{E}\|u_0\|_{\dot{B}_{p,r}^{\frac{d}{p}-1}}^r+T\|\beta_1(\cdot,N+1)\|_{L^\infty}
+(R^r+\|\gamma\|_{L^\infty})\|u\|_{S_{{T}}}^r\right)
\end{equation}
and
\begin{eqnarray}
\label{4.12}
\|K(u)-K(v)\|_{S_T}^r
&\leq& C^*\big((R^r+1)\|\gamma\|_{L^\infty}+(\frac{T}{R^r}+T)
(\|\beta_1(\cdot,N+1)\|_{L^\infty}\nonumber\\
&&+\|\beta_2(\cdot,2N+2)\|_{L^\infty})\big)\|u-v\|_{S_T}^r.
\end{eqnarray}
Define
\begin{equation}
S_{T,M}=\{w|w\in S_T,\|w\|_{S_T}^r\leq M\},
\end{equation}
\begin{equation}
\label{4.14}
R=\min\{1,(4C^*)^{-\frac{1}{r}}\},
\end{equation}
\begin{equation}
M=3C^*\mathbb{E}\|u_0\|_{\dot{B}_{p,r}^{\frac{d}{p}-1}}^r+1,
\end{equation}
and
\begin{equation}
\hat{T}=\frac{1}{3 C^*\big((\frac{1}{R^r}+1)\|\beta_1(\cdot,N+1)\|_{L^\infty}+\|\beta_2(\cdot,2N+2)\|_{L^\infty}\big)}.
\end{equation}
Then if
\begin{equation}
C^*\|\gamma\|_{L^\infty}<\frac{1}{4},\label{gamma}
\end{equation}
then (\ref{4.11}) and (\ref{4.12}) imply the mapping $K$ is a contracting mapping from $S_{\hat{T},M}$ into itself.
By contraction mapping principle, we know that the system (\ref{4.3}) has a unique strong solution $u$   on $[0,\hat{T}]$, where $\hat{T}>0$ is independent of $u_0$. Then we repeat this process to obtain a global strong solution $u$ of the system (\ref{4.3}).
\end{proof}
\section{Proof of  main theorems}
In this section, we shall prove the main theorems  of this paper.
\subsection{Proof of  Theorem $\ref{C2}$.}
By Proposition \ref{jieduancunai}, for each $N\geq 1$, there exists a strong solution $u_N$ for  the system (\ref{4.3})
such that $u_N\in L^r(\Omega;C([0,\infty);\dot{B}_{p,r}^{\frac{d}{p}-1}))
\cap L^r(\Omega;L_{loc}^r(0,\infty);\dot{B}_{p,r}^{\frac{d}{p}-1+\frac{2}{r}})$. We define stopping time
\begin{equation}
\label{5.1}
\begin{split}
\tilde{\sigma}(\omega)&=\left\{\begin{array}{l}
\text{inf}\bigg\{0\leq t<\infty:\|u_N\|_{L_t^r\dot{B}_{p,r}^{\frac{d}{p}-1+\frac{2}{r}}}\geq R\bigg\},\\
\infty,\text{if the above set $\{\cdot\}$ is empty},
\end{array}
\right.\\
\tilde{\varrho}_N(\omega)&=\left\{\begin{array}{l}
\text{inf}\bigg\{0\leq t<\infty:\|u_N\|_{\dot{B}_{p,r}^{\frac{d}{p}-1}}\geq N\bigg\},\\
\infty,\text{if the above set $\{\cdot\}$ is empty},
\end{array}
\right.\\
\tilde{\tau}_N&=\min\{\tilde{\sigma},\tilde{\varrho}_N\}.
\end{split}
\end{equation}
Then, on $[0,\tilde{\tau}]$, where $\tilde{\tau}=\tilde{\tau}_{N_1}\wedge
\tilde{\tau}_{N_2}:=\min\{\tilde{\tau}_{N_1},\tilde{\tau}_{N_2}\}$,   we have
\begin{eqnarray*}
u_{N_1}-u_{N_2}
&=&B(u_{N_1},u_{N_2})-B(u_{N_2,},u_{N_2})+F_f(u_{N_1})-F_f(u_{N_2})\\
&=&B(u_{N_1}-u_{N_2},u_{N_1})+B(u_{N_2},u_{N_1}-u_{N_2})+F_f(u_{N_1})-F_f(u_{N_2}).
\end{eqnarray*}
By (\ref{3.6}), Lemma \ref{F3} and Proposition \ref{F5}, we obtain
\begin{eqnarray}
\label{5.2}
&&\mathbb{E}\left(\|u_{N_1}-u_{N_2}\|_{L_{T\wedge\tilde{\tau}}^\infty\dot{B}_{p,r}^{\frac{d}{p}-1}}^r
+\|u_{N_1}-u_{N_2}\|_{L_{T\wedge\tilde{\tau}}^r\dot{B}_{p,r}^{\frac{d}{p}-1+\frac{2}{r}}}^r\right)\nonumber\\
& \leq& C^*\bigg(2R^r\mathbb{E}\|u_{N_1}-u_{N_2}\|_{L_{T\wedge\tilde{\tau}}^r\dot{B}_{p,r}^{\frac{d}{p}-1+\frac{2}{r}}}^r
+\|\beta_2(\cdot,2N)\|_{L^\infty}\int_0^T\mathbb{E}\|u_{N_1}-u_{N_2}\|_{L_{s\wedge\tilde{\tau}}^\infty
\dot{B}_{p,r}^{\frac{d}{p}-1}}^r ds\nonumber\\
&&+\|\gamma\|_{L^\infty}\mathbb{E}\|u_{N_1}-u_{N_2}
\|_{L_{T\wedge\tilde{\tau}}^r\dot{B}_{p,r}^{\frac{d}{p}-1+\frac{2}{r}}}^r\bigg)\nonumber\\
& \leq& \frac{3}{4}\mathbb{E}\|u_{N_1}-u_{N_2}\|_{L_{T\wedge\tilde{\tau}}^r\dot{B}_{p,r}^{\frac{d}{p}-1+\frac{2}{r}}}^r
+C^*\|\beta_2(\cdot,2N)\|_{L^\infty}\int_0^T\mathbb{E}\|u_{N_1}-u_{N_2}\|_{L_{s\wedge\tilde{\tau}}^\infty
\dot{B}_{p,r}^{\frac{d}{p}-1}}^r ds
\end{eqnarray}
for any $T\geq 0$ and $N=N_1\vee N_2:=\max\{N_1,N_2\},$ where we make use of the definition of $R$ in (\ref{4.14}) and the smallness of $\|\gamma\|_{L^\infty}$ in (\ref{gamma}). Therefore, rearranging the inequality above and invoking the
Gr\"{o}nwall lemma, we conclude that
\begin{equation}
\mathbb{E}\sup\limits_{s\in [0,T\wedge\tilde{\tau}]}\|u_{N_1}-u_{N_2}\|_{\dot{B}_{p,r}^{\frac{d}{p}-1}}^r=0.
\end{equation}
Since $T$ is arbitrary, we obtain
\begin{equation}
\label{5.4}
u_{N_1}=u_{N_2}~~\text{on}~~ [0,\tilde{\tau}].
\end{equation}
Now assume that $N_1<N_2$, hence we have
\begin{equation}
\label{5.5}
\tilde{\tau}_{N_1}(\omega)\leq\tilde{\tau}_{N_2}(\omega)~~\text{for almost all}~\omega\in\Omega.
\end{equation}
If not, (\ref{5.1}) and (\ref{5.4}) yield
$$\|u_{N_1}(\tilde{\tau}_{N_2})\|_{\dot{B}_{p,r}^{\frac{d}{p}-1}}\geq N_2,$$
which is a contradiction with the definition of $\tilde{\tau}_N$ in (\ref{5.1}).
Let
\begin{equation}
\label{e6}
\lim\limits_{N\rightarrow\infty}\tilde{\tau}_N(\omega)=\tau_*~~\text{for almost all}~\omega\in\Omega.
\end{equation}
For $0\leq t\leq  {\tau}_*(\omega)$, we define $u(t)=u_N(t)$, where $\tilde{\tau}_N\geq t$. Then
$u$ is well defined by (\ref{5.4}) and (\ref{5.5}). It holds that
$$u(\omega)\in C([0,\tau_*(\omega));\dot{B}_{p,r}^{\frac{d}{p}-1})\cap L_{loc}^r(0,\tau_*(\omega);\dot{B}_{p,r}^{\frac{d}{p}-1+\frac{2}{r}})~~\mathbb{P}-a.s..$$
Then, $\tau_N$ defined by (\ref{3.2}) in term of this $u$ is the same as $\tilde{\tau}_N$ defined by (\ref{5.1}),
and $\tau$ defined by (\ref{3.3}) is the same as $\tau_*$. If $\tau(\omega)<\infty$, we set $u(t)=0$ for $t\geq \tau(\omega)$.
Therefore, $(u,\tau)$ is obviously a solution in the sense of Definition \ref{C1}. If there are two solutions $(u^1,\tau^1)$
and $
(u^2,\tau^2)$, similarly to the above argument in (\ref{5.2})-(\ref{5.4}), we can easily show $u^1=u^2$ on $[0,\tau_N^1\wedge\tau_N^2]$ for each $N\geq 1$. Hence we obtain the uniqueness of the local strong solution in the sense of (\ref{3.4}).

Next, we shall prove (\ref{3.7}). As similar as (\ref{4.7}), for all $\delta>0$, it holds
\begin{eqnarray}
\mathbb{E}\int_0^{\delta\wedge\tau_N}\|u(s)\|^r_{\dot{B}_{p,r}^{\frac{d}{p}-1+\frac{2}{r}}}ds
&\leq& C^*\bigg(\mathbb{E}\|e^{t\Delta}u_0\|_{L_T^r\dot{B}_{p,r}^{\frac{d}{p}-1+\frac{2}{r}}}^r
+\delta\|\beta_1(t,N)\|_{L^\infty}\nonumber\\
&&+(R^{r}+\|\gamma(t)\|_{L^\infty})\mathbb{E}\int_0^{\delta\wedge\tau_N}\|u(s)\|^r_{\dot{B}_{p,r}^{\frac{d}{p}-1+\frac{2}{r}}}ds
\bigg),
\end{eqnarray}
and
\begin{eqnarray}
\mathbb{E}\sup\limits_{0\leq s\leq \delta}\|u(s\wedge\tau_N)\|_{\dot{B}_{p,r}^{\frac{d}{p}-1}}^r
&\leq& C^*\bigg(\mathbb{E}\|u_0\|_{\dot{B}_{p,r}^{\frac{d}{p}-1}}^r
+\delta\|\beta_1(t,N)\|_{L^\infty}\nonumber\\
&&+(R^{r}+\|\gamma(t)\|_{L^\infty})\mathbb{E}\int_0^{\delta\wedge\tau_N}\|u(s)\|^r_{\dot{B}_{p,r}^{\frac{d}{p}-1+\frac{2}{r}}}ds
\bigg).
\end{eqnarray}
According to the definition of $R$ in (\ref{4.14}) and the smallness of $\|\gamma\|_{L^\infty}$ in (\ref{gamma}), we have
\begin{equation}
\label{5.9}
\mathbb{E}\int_0^{\delta\wedge\tau_N}\|u(s)\|^r_{\dot{B}_{p,r}^{\frac{d}{p}-1+\frac{2}{r}}}ds
\leq 2C^*\big(\mathbb{E}\|e^{t\Delta}u_0\|_{L_\delta^r\dot{B}_{p,r}^{\frac{d}{p}-1+\frac{2}{r}}}^r
+\delta\|\beta_1(t,N)\|_{L^\infty}\big),
\end{equation}
\begin{equation}
\label{5.10}
\mathbb{E}\sup\limits_{0\leq s\leq \delta}\|u(s\wedge\tau_N)\|_{\dot{B}_{p,r}^{\frac{d}{p}-1}}^r
\leq 2C^*\big(\mathbb{E}\|u_0\|_{\dot{B}_{p,r}^{\frac{d}{p}-1}}^r
+\delta\|\beta_1(t,N)\|_{L^\infty}\big).
\end{equation}
By the definition of $\tau_N$, one has
\begin{equation*}
\{\omega|\tau_N(\omega)\leq \delta\}\subset\bigg\{\omega\big|\sup\limits_{0\leq s\leq \delta}
\|u(s\wedge\tau_N)\|
_{\dot{B}_{p,r}^{\frac{d}{p}-1}}\geq N\bigg\}\cup\bigg\{\omega\big|\int_0^{\delta\wedge\tau_N}\| u(s)\|_{\dot{B}_{p,r}^{\frac{d}{p}-1+\frac{2}{r}}}^r ds\}\geq R^r\bigg\}.
\end{equation*}
By Chebyshev's inequality,  for any $N\in \mathbb{N}^+$, it follows from (\ref{5.9}) and (\ref{5.10}) that
\begin{eqnarray}
\label{5.11}
\mathbb{P}(\{\tau_N\leq \frac{1}{k}\})
&\leq& \frac{2C^*}{N^r}\bigg(\mathbb{E}\|u_0\|_{\dot{B}_{p,r}^{\frac{d}{p}-1}}^r
+\frac{1}{k}\|\beta_1(t,N)\|_{L^\infty}\bigg)\nonumber\\
&&+\frac{2C^*}{R^r}\bigg(\mathbb{E}\|e^{t\Delta}u_0\|_{L_{\frac{1}{k}}^r\dot{B}_{p,r}^{\frac{d}{p}-1+\frac{2}{r}}}^r
+\frac{1}{k}\|\beta_1(t,N)\|_{L^\infty}\bigg).
\end{eqnarray}
As $u_0\in L^r(\Omega;\dot{B}_{p,r}^{\frac{d}{p}-1})$, for any $\varepsilon<1$, a positive $J_{\varepsilon,u_0}$ exists
such that
\begin{equation*}
\sum\limits_{j\geq J_{\varepsilon,u_0}}\mathbb{E} 2^{jr(\frac{d}{p}-1)}\|\Delta_j u_0\|_{L^p}^r\leq\frac{ R^r\varepsilon}{16C^*}.\\
\end{equation*}
Thus, we have
\begin{eqnarray}
\label{5.12}
\mathbb{E}\|e^{t\Delta}u_0\|_{L_{\frac{1}{k}}^r\dot{B}_{p,r}^{\frac{d}{p}-1+\frac{2}{r}}}^r
&=&\sum\limits_{j=-\infty}^\infty \mathbb{E} 2^{jr(\frac{d}{p}-1+\frac{2}{r})}\|e^{t\Delta }\Delta_j u_0\|^r_{L_{\frac{1}{k}}^rL^p}\nonumber\\
&\leq& \frac{R^r\varepsilon}{16C^*}+\sum\limits_{j\leq J_{\varepsilon,U_0}}\mathbb{E} 2^{jr(\frac{d}{p}-1+\frac{2}{r})}\|e^{t\Delta }\Delta_ju_0\|^r_{L_{\frac{1}{k}}^rL^p}\nonumber\\
&\leq& \frac{R^r\varepsilon}{16C^*}+\frac{1}{k}2^{2J_{\epsilon,u_0}}\mathbb{E}\|u_0\|_{\dot{B}_{p,r}^{\frac{d}{p}-1}}^r.
\end{eqnarray}
 Using (\ref{5.11}) and (\ref{5.12}), for any $\varepsilon>0$, let $N=\left({16C^*\mathbb{E}\|u_0\|_{\dot{B}_{p,r}^{\frac{d}{p}-1}}^r}{\varepsilon}^{-1}\right)^{\frac{1}{r}}$, and $$k\geq K=\max\{\frac{\|\beta_1(t,N)\|_{L^\infty}}{\mathbb{E}\|u_0\|_{\dot{B}_{p,r}^{\frac{d}{p}-1}}^r},
16C^*\|\beta_1(t,N)\|_{L^\infty}\varepsilon^{-1} R^{-r},C^*2^{2J_{\varepsilon,u_0}+4}\mathbb{E}\|u_0\|_{\dot{B}_{p,r}^{\frac{d}{p}-1}}^r \varepsilon^{-1} R^{-r}\}, $$we have
\begin{equation}
\mathbb{P}(\{\tau_N\leq \frac{1}{k}\})\leq \varepsilon.
\end{equation}
Hence, we get
\begin{eqnarray*}
\mathbb{P}(\{\tau>0\})
&=&1-\mathbb{P}(\{\tau=0\})\\
&\geq& 1-\mathbb{P}(\{\tau_N=0\})\\
&\geq& 1-\lim\limits_{k\rightarrow\infty}\mathbb{P}(\{\tau_N\leq\frac{1}{k}\})\\
&\geq& 1-\varepsilon,
\end{eqnarray*}
then $\mathbb{P}(\{\tau>0\})=1$ since $\varepsilon$ is arbitrary. Therefore, we finish the proof of Theorem
\ref{C2}. \hfill$\Box$

Next, if the initial data are small and $\beta_1(t,x)=\eta(t)x^r$, we shall show the global existence of the strong solution in probability.
\subsection{Proof of Theorem $\ref{C3}$}
\begin{lem}
\label{E1}
Let $(u,\tau)$ be a local strong solution of  the system (\ref{1.1}) in Theorem \ref{C2} and $\sigma$  be the stopping time defined in (\ref{3.2}).
If $\beta_1(t,x)=\eta(t)x^r$ with $\|\eta\|_{L^1}<\frac{1}{C^*}$, then we have
$\sigma\leq\tau$ almost surely on the set $\{\tau<\infty\}$.
\end{lem}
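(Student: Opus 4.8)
The plan is to show that as long as the integral norm $\|u\|_{L_t^r\dot{B}_{p,r}^{\frac{d}{p}-1+\frac{2}{r}}}$ stays below $R$, the sup-norm $\sup_{t<\tau}\|u(t)\|_{\dot{B}_{p,r}^{\frac{d}{p}-1}}$ cannot blow up; since a blow-up of the latter is exactly what would drive $\tau$ strictly below $\sigma$, ruling it out forces $\sigma\le\tau$. Recall from (\ref{3.2})--(\ref{3.3}) that $\tau=\lim_N\tau_N=\min\{\sigma,\varrho_\infty\}$ with $\varrho_\infty:=\lim_N\varrho_N$, so that $\tau\le\sigma$ always holds. Hence it suffices to rule out the event $\{\sigma>\tau\}\cap\{\tau<\infty\}$, on which necessarily $\varrho_\infty=\tau<\infty$.

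First I would establish an a priori bound. Working on $[0,T\wedge\tilde{\tau}_N]$ with $u=u_N$ (the approximations from (\ref{5.1})), both cut-offs are inactive and the integral equation $u=e^{t\Delta}u_0+B(u,u)+F_f(u)$ holds, while $\|u\|_{L_t^r\dot{B}_{p,r}^{\frac{d}{p}-1+\frac{2}{r}}}\le R$ for $t\le\tilde{\tau}_N\le\tilde{\sigma}$. Applying the bilinear estimate of Lemma \ref{F3} to $B(u,u)$ and the stochastic heat smoothing of Proposition \ref{F5} together with (\ref{3.5}) (now with $\beta_1(t,x)=\eta(t)x^r$) to $F_f(u)$, exactly as in (\ref{4.7}), gives
\[
\mathbb{E}\|u\|_{\mathcal{L}^\infty_{T\wedge\tilde{\tau}_N}\dot{B}_{p,r}^{\frac{d}{p}-1}}^r
\le C^*\big(\mathbb{E}\|u_0\|_{\dot{B}_{p,r}^{\frac{d}{p}-1}}^r+R^{2r}+\|\gamma\|_{L^\infty}R^r\big)
+C^*\,\mathbb{E}\!\int_0^{T\wedge\tilde{\tau}_N}\!\eta(s)\|u(s)\|_{\dot{B}_{p,r}^{\frac{d}{p}-1}}^r\,ds.
\]
The decisive point is the last term: rather than bounding $\|u(s)\|_{\dot{B}_{p,r}^{\frac{d}{p}-1}}$ by the cut-off level $N$ as in the proof of Theorem \ref{C2}, I would keep it and pull the supremum out, $\int_0^{T\wedge\tilde{\tau}_N}\eta\|u\|^r\le\|\eta\|_{L^1}\|u\|_{\mathcal{L}^\infty_{T\wedge\tilde{\tau}_N}\dot{B}_{p,r}^{\frac{d}{p}-1}}^r$, so that this term is at most $C^*\|\eta\|_{L^1}\mathbb{E}\|u\|_{\mathcal{L}^\infty_{T\wedge\tilde{\tau}_N}\dot{B}_{p,r}^{\frac{d}{p}-1}}^r$. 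Since $\|\eta\|_{L^1}<\frac{1}{C^*}$, it can be absorbed into the left-hand side, yielding a bound $\mathbb{E}\|u\|_{\mathcal{L}^\infty_{T\wedge\tilde{\tau}_N}\dot{B}_{p,r}^{\frac{d}{p}-1}}^r\le C_1$ uniform in $N$ and $T$. Letting $N\to\infty$ and then $T\to\infty$, monotone convergence gives $\mathbb{E}\sup_{t<\tau}\|u(t)\|_{\dot{B}_{p,r}^{\frac{d}{p}-1}}^r\le C_1<\infty$, hence $\sup_{t<\tau}\|u(t)\|_{\dot{B}_{p,r}^{\frac{d}{p}-1}}<\infty$ almost surely.

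Then I would conclude by contradiction. Suppose $\mathbb{P}(\{\sigma>\tau\}\cap\{\tau<\infty\})>0$; on this event $\varrho_\infty=\tau<\infty$. If $\sup_{t<\tau}\|u(t)\|_{\dot{B}_{p,r}^{\frac{d}{p}-1}}=:M<\infty$ there, then picking an integer $N_0>M$ the norm never reaches $N_0$ on $[0,\tau)$, and by the convention $u\equiv0$ for $t\ge\tau$ it never reaches $N_0$ at all, so $\varrho_{N_0}=\infty$ and therefore $\varrho_\infty=\infty$, contradicting $\varrho_\infty=\tau<\infty$. Hence $\sup_{t<\tau}\|u\|_{\dot{B}_{p,r}^{\frac{d}{p}-1}}=\infty$ on this event, which contradicts the a priori bound just obtained. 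Thus the event is null and $\sigma\le\tau$ almost surely on $\{\tau<\infty\}$.

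The main obstacle is the a priori estimate, and specifically the absorption step: one must retain the sharp dependence $\beta_1(t,\|u\|)=\eta(t)\|u\|^r$ and move it to the left-hand side, which is precisely where the hypothesis $\|\eta\|_{L^1}<\frac{1}{C^*}$ enters (with the same constant $C^*$ as in (\ref{4.7}) and (\ref{4.14})). A secondary technical care is needed because $\tau$ is random and the integral equation for $u$ is only available on $[0,\tau)$: one should argue on $[0,T\wedge\tilde{\tau}_N]$ with the approximations $u_N$ and pass to the monotone limit, and one should check that the smallness conditions on $R$ and $\gamma$ from (\ref{4.14}) and (\ref{gamma}) leave room for the absorption of the $\eta$-term.
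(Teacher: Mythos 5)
Your proposal is correct and takes essentially the same route as the paper: the heart of both arguments is the a priori estimate of type (\ref{4.7}) on the stopped interval, with the noise contribution $C^*\mathbb{E}\int \eta(s)\|u(s)\|^r_{\dot{B}_{p,r}^{\frac{d}{p}-1}}\,ds$ bounded by $C^*\|\eta\|_{L^1}$ times the sup-in-time norm and absorbed into the left-hand side using $C^*\|\eta\|_{L^1}<1$ together with the smallness of $R$ from (\ref{4.14}) and of $\|\gamma\|_{L^\infty}$ from (\ref{gamma}). The only difference is cosmetic: the paper concludes via Chebyshev's inequality, showing $\mathbb{P}(\{\tau_N\leq\sigma\})\leq CN^{-r}\rightarrow 0$, whereas you deduce almost sure finiteness of $\sup_{t<\tau}\|u(t)\|_{\dot{B}_{p,r}^{\frac{d}{p}-1}}$ and argue by contradiction through $\varrho_\infty=\tau$ — an equivalent bookkeeping of the same uniform bound.
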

\begin{proof}
As similar as  (\ref{4.7}),  for all $t\geq 0$ we have
\begin{eqnarray}
&&\mathbb{E}\sup\limits_{0\leq s\leq t}\|u(s\wedge\sigma\wedge\tau)\|_{\dot{B}_{p,r}^{\frac{d}{p}-1}}^r
+\mathbb{E}\int_0^{t\wedge\delta\wedge\tau}\|u(s)\|^r_{\dot{B}_{p,r}^{\frac{d}{p}-1+\frac{2}{r}}}ds\nonumber\\
\label{e15}
 &\leq& C^*\bigg(\mathbb{E}\|u_0\|_{\dot{B}_{p,r}^{\frac{d}{p}-1}}^r+(R^r+\|\gamma\|_{L^\infty})
 \mathbb{E}\int_0^{t\wedge\delta\wedge\tau}\|u(s)\|^r_{\dot{B}_{p,r}^{\frac{d}{p}-1+\frac{2}{r}}}ds
 \nonumber\\
 &&+\|\eta\|_{L^1}
 \mathbb{E}\sup\limits_{0\leq s\leq t}\|u(s\wedge\sigma\wedge\tau)\|_{\dot{B}_{p,r}^{\frac{d}{p}-1}}^r\bigg).
\end{eqnarray}
Invoking the definition of $R$ in (\ref{4.14}) and the smallness of $\|\gamma\|_{L^\infty}$ in (\ref{gamma}), we obtain
\begin{equation*}
(1-C^*\|\eta\|_{L^1})\sup\limits_{0\leq s\leq t}\mathbb{E}\|u(s\wedge\sigma\wedge\tau)\|_{\dot{B}_{p,r}^{\frac{d}{p}-1}}^r
 \leq C^*\mathbb{E}\|u_0\|_{\dot{B}_{p,r}^{\frac{d}{p}-1}}^r,
\end{equation*}

then
\begin{equation*}
\sup\limits_{0\leq s\leq t}\mathbb{E}\|u(s\wedge\sigma\wedge\tau)\|_{\dot{B}_{p,r}^{\frac{d}{p}-1}}^r
 \leq \frac{C^*}{1-C^*\|\eta\|_{L^1}}\mathbb{E}\|u_0\|_{\dot{B}_{p,r}^{\frac{d}{p}-1}}^r.
\end{equation*}
By Chebyshev's inequality , we have
\begin{equation*}
\mathbb{P}(\{\tau_N\leq \sigma\})\leq \frac{C^* \mathbb{E}\|u_0\|_{\dot{B}_{p,r}^{\frac{d}{p}-1}}^r}{N^r(1-C^*\|\eta\|_{L^1})}.
\end{equation*}
Therefore,
\begin{equation*}
\mathbb{P}(\{\tau\leq\sigma\})=\lim_{N\rightarrow \infty}\mathbb{P}(\{\tau_N\leq \sigma\})=0.
\end{equation*}
The proof of Lemma \ref{E1} is thus completed.
\end{proof}
By (\ref{e15}), we have
\begin{equation}
\mathbb{E}\int_0^{t\wedge \sigma}\|u\|_{\dot{B}_{p,r}^{\frac{d}{p}-1+\frac{2}{r}}}^r\leq 2C^*\mathbb{E}\|u_0\|_{\dot{B}_{p,r}^{\frac{d}{p}-1}}^r.
\end{equation}
Let us denote
$$\mathcal{G}=\{\sigma<\infty\}.$$
By Fatou's lemma, for $0\leq t<\infty$, we have
\begin{eqnarray*}
R^r\mathbb{P}(\mathcal{G})
&=&\mathbb{E}(\chi_{\mathcal{G}}\lim\limits_{t\rightarrow{\infty}}\int_0^{t\wedge \sigma}\|u\|_{\dot{B}_{p,r}^{\frac{d}{p}-1+\frac{2}{r}}}^r)\\
&\leq&\lim\inf\limits_{t\rightarrow\infty}\mathbb{E}(\chi_{\mathcal{G}}\int_0^{t\wedge \sigma}\|u\|_{\dot{B}_{p,r}^{\frac{d}{p}-1+\frac{2}{r}}}^r)\\
&\leq&2C^*\mathbb{E}\|u_0\|^r_{\dot{B}_{p,r}^{\frac{d}{p}-1}}.
\end{eqnarray*}
Here $\chi_\mathcal{G}$ is the characteristic function of the set $\mathcal{G}$. Hence
$$\mathbb{P}(\mathcal{G})\leq \frac{2C^*}{R^r}\mathbb{E}\|u_0\|_{\dot{B}_{p,r}^{\frac{d}{p}-1}}^r.$$
Therefor, by Lemma \ref{E1},
$$\mathbb{P}(\{\tau=\infty\})
=\mathbb{P}(\{\sigma=\infty\})\geq1-\frac{2C^*}{R^r}\mathbb{E}\|u_0\|_{\dot{B}_{p,r}^{\sigma_p}}^r.$$
Then for any $\epsilon>0$,   choosing $\delta=\frac{\epsilon R^r}{2C^*}$,  we complete the proof of Theorem \ref{C3}.\hfill $\Box$
\section{Appendix}
The following two lemmas describe the bilinear estimate of $\mathbf{P}\text{div}(a\otimes b)$ and
the smothing effect of the heat flow. Then will de found in   \cite{zhang2008wellposedness}, see also
  \cite{Chemin1999Th}.

\begin{lem}[Lemma 4.1 in   \cite{zhang2008wellposedness}]
\label{F1}
Let $(p,q,r)\in [1,2)\times [2,\infty]\times [1,\infty]$ such that $\frac{d}{p}-1+\frac{4}{q}>1$ or $(p,q,r)\in [2,\infty]^2
\times [1,\infty]$ such that $\frac{d}{p}-1+\frac{2}{q}>0$. A constant $C$ exists such that
\begin{equation}
\|\mathbf{P}\text{div}(u\otimes v)\|_{\mathcal{L}^{\frac{q}{2}}([0,T];\dot{B}_{p,r}^{ \frac{d}{p}-3+\frac{4}{q}})}
\leq C \|u\|_{\mathcal{L}^q([0,T];\dot{B}_{p,r}^{\frac{d}{p}-1+\frac{2}{q}})}
\|v\|_{\mathcal{L}^q([0,T];\dot{B}_{p,r}^{\frac{d}{p}-1+\frac{2}{q}})}.
\end{equation}
\end{lem}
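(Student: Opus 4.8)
The plan is to reduce the claim to a scalar product estimate in Besov spaces and then run Bony's paraproduct decomposition, tracking the time integrability through the Chemin--Lerner structure. First I would note that the Leray projector $\mathbf{P}=\mathrm{Id}+\nabla(-\Delta)^{-1}\mathrm{div}$ is a homogeneous Fourier multiplier of degree $0$ whose symbol is smooth away from the origin, hence it is bounded on every $\dot{B}_{p,r}^s$; composed with $\mathrm{div}$ it becomes an order-one operator sending $\dot{B}_{p,r}^{s}$ to $\dot{B}_{p,r}^{s-1}$ with a time-independent norm. Writing $\rho:=\frac{d}{p}-1+\frac{2}{q}$ for the common regularity of the two factors, it therefore suffices to establish the product law
\begin{equation*}
\|u\otimes v\|_{\mathcal{L}_T^{q/2}(\dot{B}_{p,r}^{2\rho-d/p})}\leq C\,\|u\|_{\mathcal{L}_T^{q}(\dot{B}_{p,r}^{\rho})}\,\|v\|_{\mathcal{L}_T^{q}(\dot{B}_{p,r}^{\rho})}
\end{equation*}
componentwise and then lose one derivative, the output index $2\rho-\frac{d}{p}$ being exactly $\frac{d}{p}-2+\frac{4}{q}$, so that $\mathbf{P}\mathrm{div}$ lands in $\dot{B}_{p,r}^{\frac{d}{p}-3+\frac{4}{q}}$ as required.

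For each scalar entry I would use $u_kv_\ell=T_{u_k}v_\ell+T_{v_\ell}u_k+R(u_k,v_\ell)$ with $T_ab=\sum_jS_{j-1}a\,\Delta_jb$ and $R(a,b)=\sum_j\sum_{|j-j'|\leq1}\Delta_ja\,\Delta_{j'}b$. The two paraproducts are the routine terms and behave identically in both parameter regimes: the Bernstein embedding $\dot{B}_{p,r}^{\rho}\hookrightarrow\dot{B}_{\infty,r}^{\rho-d/p}=\dot{B}_{\infty,r}^{-1+2/q}$ combined with the standard continuity of the paraproduct gives $T_{u_k}v_\ell\in\dot{B}_{p,r}^{\rho+(\rho-d/p)}=\dot{B}_{p,r}^{2\rho-d/p}$, the borderline index at $q=2$ causing no trouble thanks to the $\ell^r$ summation; Hölder in time turns the two $\mathcal{L}^q_T$ factors into the required $\mathcal{L}^{q/2}_T$ factor, and $T_{v_\ell}u_k$ is symmetric.

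The remainder $R(u_k,v_\ell)$ is the delicate term and the place where the two cases separate, so I expect it to be the main obstacle. Each block $\sum_{|j-j'|\leq1}\Delta_ju_k\,\Delta_{j'}v_\ell$ is spectrally supported in a \emph{ball} of radius $\sim2^j$ rather than an annulus, so after relocalising with $\Delta_m$ only the indices $j\gtrsim m$ survive, and the convergence of the resulting sum forces a positivity condition on the total regularity $2\rho$. The natural home of the product is $L^{p/2}$ in space. When $p\geq2$ this is a genuine Banach space: Hölder gives $\|\Delta_ju_k\,\Delta_{j'}v_\ell\|_{L^{p/2}}\leq\|\Delta_ju_k\|_{L^p}\|\Delta_{j'}v_\ell\|_{L^p}$, one Bernstein step from $L^{p/2}$ to $L^p$ costs exactly $\frac{d}{p}$ derivatives, and summing against the weight $2^{m(2\rho-d/p)}$ together with Hölder in time converges precisely when $2\rho>0$, i.e. $\frac{d}{p}-1+\frac{2}{q}>0$, which is the second regime. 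When $1\leq p<2$ the exponent $p/2<1$ falls outside the Banach range, so the naive triangle/summation argument is no longer available; one is thrown back on the sharp product threshold, which in the static setting reads $2\rho>d\bigl(\tfrac{2}{p}-1\bigr)$, and in the present time-weighted framework the interplay with the $L^q_t$ integrability reshapes this into the strictly stronger hypothesis $\frac{d}{p}-1+\frac{4}{q}>1$ of the first regime. Once the three contributions are summed in $\mathcal{L}^{q/2}_T(\dot{B}_{p,r}^{2\rho-d/p})$, applying the order-one multiplier $\mathbf{P}\mathrm{div}$ shifts the regularity down by one and yields the stated inequality.
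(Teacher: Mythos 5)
Your skeleton is the standard one and, as far as the paper is concerned, there is nothing to compare it against: the paper does not prove this lemma but quotes it from Zhang (Lemma 4.1) and Chemin, whose proofs follow exactly your route --- $\mathbf{P}\mathrm{div}$ as an order-one homogeneous multiplier, reduction to the product law at regularity $\rho=\frac dp-1+\frac 2q$ with output $2\rho-\frac dp=\frac dp-2+\frac 4q$, Bony decomposition, and H\"older in time $L^q\cdot L^q\to L^{q/2}$ blockwise. However, the two places where you wave your hands are precisely the two places where the proof lives, and one of your claims is false as stated. First, the borderline $q=2$ is \emph{not} ``no trouble thanks to the $\ell^r$ summation.'' For the paraproduct one estimates $\|S_{j-1}u\|_{L^q_TL^\infty}\lesssim\sum_{j'\le j-2}2^{j'(1-2/q)}c_{j'}\,\|u\|_{\mathcal{L}^q_T\dot B^{\rho}_{p,r}}$ with $(c_{j'})\in\ell^r$; for $q>2$ the kernel $2^{-k(1-2/q)}\mathbf{1}_{k\ge 2}$ is summable and Young's inequality for convolutions closes the estimate, but at $q=2$ the kernel is the constant sequence and the sum requires $(c_{j'})\in\ell^1$, i.e. $r=1$. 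No $\ell^r$ bookkeeping rescues this: testing the inequality on time-independent fields, both sides scale like $T^{2/q}$, so at $q=2$ the lemma reduces to the static bound $\|\mathbf{P}\mathrm{div}(u\otimes v)\|_{\dot B^{d/p-1}_{p,r}}\lesssim\|u\|_{\dot B^{d/p}_{p,r}}\|v\|_{\dot B^{d/p}_{p,r}}$, essentially the algebra property of $\dot B^{d/p}_{p,r}$, which fails for $r>1$. So your argument, done honestly, yields the lemma for $q>2$ (any $r$), and the endpoint $q=2$, $r>1$ cannot be obtained this way at all.

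Second, the case $p\in[1,2)$, which you defer with a vague appeal to a ``sharp product threshold reshaped by the $L^q_t$ integrability,'' is a misdiagnosis: no time interplay occurs, since time is disposed of by the same H\"older step $L^q\cdot L^q\to L^{q/2}$ in both regimes, and the distinction is purely spatial. When $p<2$ you simply replace the unavailable $L^{p/2}$ H\"older in the remainder by $\|\Delta_ju\,\tilde\Delta_jv\|_{L^p}\le\|\Delta_ju\|_{L^p}\|\tilde\Delta_jv\|_{L^\infty}\lesssim 2^{jd/p}\|\Delta_ju\|_{L^p}\|\tilde\Delta_jv\|_{L^p}$, i.e. Bernstein on one factor instead of on the product. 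Relocalising with $\Delta_m$ (only $j\ge m-N_0$ survive, as you say) then leaves $\sum_{j\ge m-N_0}2^{(m-j)(2\rho-d/p)}a_jb_j$, which converges exactly when $2\rho-\frac dp>0$, i.e. $\frac dp-1+\frac 4q>1$: the first regime's hypothesis is nothing but positivity of the \emph{output} regularity, a static ball-summation condition, while in the regime $p\ge2$ the $L^{p/2}$ route costs $\frac dp$ by Bernstein and needs only $2\rho>0$, i.e. $\frac dp-1+\frac 2q>0$, as you correctly computed. With the $p<2$ remainder argument written out as above, and with the $q=2$ endpoint either excluded or restricted to $r=1$, your outline becomes a complete proof; as submitted, it proves neither of the two delicate points it itself identifies.
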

\begin{lem}[Lemma 4.2 in   \cite{zhang2008wellposedness}, Proposition 2.1 in   \cite{Chemin1999Th}]
\label{F2}
Let $0<T\leq\infty$, $I=[0,T)$, $1\leq p,r\leq \infty$, $1\leq q\leq q_1\leq\infty$, $s\in\mathbb{R}$.  Denote $u$ as the solution  in
$\mathcal{S}'(\mathbb{R}^d)$ of
\begin{equation}
\left\{\begin{aligned}
\partial_t u-\Delta u&=f,\\
u|_{t=0}&=u_0,
\end{aligned}
\right.
\end{equation}
with $f\in\mathcal{L}^{q}(I;\dot{B}_{p,r}^{s-2+\frac{2}{q}})$ and $u_0\in\dot{B}_{p,r}^{s}$. Then
\begin{equation}
u\in \mathcal{L}^{q_1}(I;\dot{B}_{p,r}^{s+\frac{2}{q}_1})\cap \mathcal{L}^\infty(I;\dot{B}_{p,r}^{s})
\end{equation}
satisfying
\begin{equation}
\|u\|_{\mathcal{L}^{q_1}(I;\dot{B}_{p,r}^{s+\frac{2}{q}_1})}\leq C\left(\|u_0\|_{\dot{B}_{p,r}^s}
+\|f\|_{\mathcal{L}^{q}(I;\dot{B}_{p,r}^{s-2+\frac{2}{q}})}\right).
\end{equation}
Especially, $u\in C(I;\dot{B}_{p,r}^{s})$ when $r<\infty$.
\end{lem}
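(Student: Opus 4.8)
The plan is to combine the Duhamel representation with a dyadic frequency localization and a quantitative smoothing estimate for the heat semigroup on frequency annuli. Writing the solution via Duhamel's formula as
$$u(t)=e^{t\Delta}u_0+\int_0^t e^{(t-s)\Delta}f(s)\,ds,$$
I would first apply the Littlewood-Paley block $\Delta_j$ to both sides. Since $\Delta_j$ is a Fourier multiplier it commutes with the heat semigroup, giving
$$\Delta_j u(t)=e^{t\Delta}\Delta_j u_0+\int_0^t e^{(t-s)\Delta}\Delta_j f(s)\,ds.$$

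The key technical ingredient, and the step I expect to be the main obstacle, is the frequency-localized heat decay estimate: there exist constants $c,C>0$, independent of $j$ and $t$, such that
$$\|e^{t\Delta}\Delta_j g\|_{L^p}\leq Ce^{-ct2^{2j}}\|\Delta_j g\|_{L^p},\qquad t\geq 0.$$
This follows from the spectral localization, since on the support of $\varphi(2^{-j}\cdot)$ one has $|\xi|\sim 2^j$, so the multiplier $e^{-t|\xi|^2}$ is dominated by $e^{-ct2^{2j}}$ uniformly; a rescaling of the resulting convolution kernel together with Young's inequality yields the $L^p$ bound. Granting this, taking the $L^p$ norm of the localized Duhamel identity produces
$$\|\Delta_j u(t)\|_{L^p}\leq Ce^{-ct2^{2j}}\|\Delta_j u_0\|_{L^p}+C\int_0^t e^{-c(t-s)2^{2j}}\|\Delta_j f(s)\|_{L^p}\,ds.$$

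Next I would estimate the two terms in $L^{q_1}(I)$ in time. For the homogeneous term, a direct computation gives $\|e^{-c\,\cdot\,2^{2j}}\|_{L^{q_1}(I)}\leq C2^{-2j/q_1}$. For the inhomogeneous term, recognizing the time integral as a convolution and invoking Young's inequality with exponents satisfying $1+\tfrac{1}{q_1}=\tfrac{1}{a}+\tfrac{1}{q}$, one bounds it by $C2^{-2j(1+1/q_1-1/q)}\|\Delta_j f\|_{L^q_t L^p}$. Multiplying through by the weight $2^{j(s+2/q_1)}$ the powers of $2^j$ telescope exactly: the homogeneous contribution becomes $2^{js}\|\Delta_j u_0\|_{L^p}$ and the inhomogeneous one becomes $2^{j(s-2+2/q)}\|\Delta_j f\|_{L^q_t L^p}$. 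Taking the $\ell^r(\mathbb{Z})$ norm and applying the triangle inequality in $\ell^r$ then yields precisely the claimed bound in terms of the Besov norms of $u_0$ and $f$; the choice $q_1=\infty$ simultaneously delivers the $\mathcal{L}^\infty(I;\dot{B}_{p,r}^s)$ membership.

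Finally, for the continuity statement when $r<\infty$, I would argue by density. For data $u_0$ and $f$ having only finitely many nonzero dyadic blocks the associated $u$ is manifestly continuous in $\dot{B}_{p,r}^s$, and the uniform estimate just established shows the solution map is bounded from the data into $\mathcal{L}^\infty(I;\dot{B}_{p,r}^s)$. Hence the general case follows by approximation, provided the dyadic tail $\sum_{|j|\geq J}$ can be made small in the relevant norm; this is exactly where the hypothesis $r<\infty$ enters, as it guarantees that the $\ell^r$-tail of the sequence of dyadic pieces vanishes as $J\to\infty$, so that $u$ is a uniform limit of continuous $\dot{B}_{p,r}^s$-valued functions and therefore itself lies in $C(I;\dot{B}_{p,r}^s)$.
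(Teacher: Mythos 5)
Your proof is correct and follows essentially the same route as the paper's sources: the paper does not prove Lemma \ref{F2} itself (it is quoted from \cite{zhang2008wellposedness} and \cite{Chemin1999Th}), but your argument --- Duhamel's formula, the frequency-localized decay $\|e^{t\Delta}\Delta_j g\|_{L^p}\leq Ce^{-ct2^{2j}}\|\Delta_j g\|_{L^p}$ of Lemma \ref{F4}, Young's inequality in time with $1+\frac{1}{q_1}=\frac{1}{a}+\frac{1}{q}$ (which is admissible precisely because $q\leq q_1$), and then $\ell^r$ summation of the exactly telescoping weights --- is the standard proof in those references and is the same mechanism the paper itself deploys for the stochastic analogue in Proposition \ref{F5}. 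Your density argument for continuity when $r<\infty$ is also sound, since $\sup_{t\in I}\|\cdot\|_{\dot{B}_{p,r}^{s}}$ is dominated by the Chemin--Lerner $\mathcal{L}^\infty(I;\dot{B}_{p,r}^{s})$ norm, so the finitely-many-block approximants (each continuous in time blockwise, even for $p=\infty$ where the heat semigroup is not strongly continuous on all of $L^\infty$ but is on spectrally localized functions) converge uniformly.
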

Combine with Lemmas \ref{F1} and \ref{F2}, we have
\begin{lem}
\label{F3}
If $(p,q,r)\in [2,\infty]^2\times [1,\infty]$
such that $\frac{d}{p}-1+\frac{2}{q}>0$. A constant $C$ exists such that
$$\|B(a,b)\|_{\mathcal{L}^{q}([0,T];\dot{B}_{p,r}^{\frac{d}{p}-1+\frac{2}{q}})}
\leq C\|a\|_{\mathcal{L}^q([0,T];\dot{B}_{p,r}^{\frac{d}{p}-1+\frac{2}{q}})}\|b\|_{\mathcal{L}^q([0,T];\dot{B}_{p,r}^{\frac{d}{p}-1+\frac{2}{q}})}$$
where $B(a,b)$ satisfies
\begin{equation*}
\left\{\begin{array}{l}
\partial_t B(a,b)-\Delta B(a,b)=\mathbf{P}\text{div}(a\otimes b),\\
B(a,b)|_{t=0}=0.
\end{array}
\right.
\end{equation*}
\end{lem}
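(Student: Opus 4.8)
The plan is to realize $B(a,b)$ as the solution of the inhomogeneous heat equation
\[
\partial_t B(a,b)-\Delta B(a,b)=\mathbf{P}\text{div}(a\otimes b),\qquad B(a,b)|_{t=0}=0,
\]
and then to simply chain the bilinear bound of Lemma \ref{F1} with the parabolic smoothing estimate of Lemma \ref{F2}. Concretely, I would set $f:=\mathbf{P}\text{div}(a\otimes b)$ and $s:=\frac{d}{p}-1$, and invoke Lemma \ref{F2} with source-integrability index $q/2$ playing the role of its ``$q$'', output index $q_1=q$, and zero initial datum $u_0=0$.

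The heart of the argument is verifying that the two scaling indices align. On the source side, with F2's integrability set to $q/2$, Lemma \ref{F2} requires $f\in\mathcal{L}^{q/2}(I;\dot B_{p,r}^{s-2+\frac{4}{q}})$; since $s=\frac{d}{p}-1$, this regularity exponent is $s-2+\frac{4}{q}=\frac{d}{p}-3+\frac{4}{q}$, which is exactly the target space appearing in Lemma \ref{F1}. Thus Lemma \ref{F1} supplies
\[
\|f\|_{\mathcal{L}^{q/2}([0,T];\dot B_{p,r}^{\frac{d}{p}-3+\frac{4}{q}})}\leq C\,\|a\|_{\mathcal{L}^q([0,T];\dot B_{p,r}^{\frac{d}{p}-1+\frac{2}{q}})}\|b\|_{\mathcal{L}^q([0,T];\dot B_{p,r}^{\frac{d}{p}-1+\frac{2}{q}})}.
\]
On the output side, with $q_1=q$ the smoothing estimate produces a bound in $\mathcal{L}^q(I;\dot B_{p,r}^{s+\frac{2}{q}})=\mathcal{L}^q(I;\dot B_{p,r}^{\frac{d}{p}-1+\frac{2}{q}})$, which is precisely the left-hand side of the claim. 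Concatenating the two inequalities (and discarding the $\|u_0\|$ term, since $u_0=0$) yields the stated bilinear estimate with a single constant $C$.

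I would then double-check the compatibility of the hypotheses across the two lemmas. The admissibility condition in Lemma \ref{F2} demands $1\le q/2\le q_1=q$, i.e. $q\ge 2$, which is guaranteed by $(p,q,r)\in[2,\infty]^2\times[1,\infty]$; and the constraint $\frac{d}{p}-1+\frac{2}{q}>0$ is exactly the hypothesis under which Lemma \ref{F1} is applied in its second regime. The only point deserving genuine care---and the one I would flag as the main, though minor, obstacle---is this index bookkeeping: one must confirm that the integrability drop from $q$ to $q/2$ forced by the quadratic nonlinearity is compensated exactly by the two derivatives regained through parabolic smoothing (the gain $+\frac{2}{q_1}$ in Lemma \ref{F2}), so that $B(a,b)$ lands back in the same Chemin--Lerner space as $a$ and $b$. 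Beyond this matching, no new estimate is required.
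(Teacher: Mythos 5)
Your proposal is correct and is exactly the paper's own argument: the paper states Lemma \ref{F3} as an immediate combination of Lemmas \ref{F1} and \ref{F2}, applying the smoothing estimate with $s=\frac{d}{p}-1$, source integrability $q/2$, output index $q_1=q$, and $u_0=0$, just as you do. Your explicit verification that $s-2+\frac{4}{q}=\frac{d}{p}-3+\frac{4}{q}$ matches the target space of Lemma \ref{F1}, and that $q\geq 2$ ensures admissibility, is precisely the index bookkeeping the paper leaves implicit.
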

Now we extend Lemma $\ref{F2}$ to the stochastic case, and  we need
the following lemma which describes the action of the semigroup of the heat equation on distributions with Fourier transforms supported in an annulus.
\begin{lem}[Lemma 2.4 in   \cite{bahouri2011fourier}, Lemma 2.1 in   \cite{Chemin1999Th}]
\label{F4}
Let $\mathcal{C}$ be an annulus. Positive constants $c$ and $C$ exist such that for any $p\in [1,\infty]$ and any couple $(t,\lambda)$ of positive real
numbers, we have
$$\text{supp}~\mathcal{F}u\subset\lambda\mathcal{C}\Rightarrow\|e^{t\Delta}u\|_{L^p}\leq Ce^{-ct\lambda^2}\|u\|_{L^p}.$$
\end{lem}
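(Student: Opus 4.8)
The plan is to realize $e^{t\Delta}$ as convolution against a kernel whose $L^1$ norm carries all the decay, and then, by scaling, to reduce everything to a single parameter $s=t\lambda^2$. First I would fix a smooth cut-off $\phi\in\mathcal{D}(\mathbb{R}^d)$, supported in an annulus $\widetilde{\mathcal{C}}$ slightly larger than $\mathcal{C}$ but still bounded away from the origin, with $\phi\equiv 1$ on $\mathcal{C}$. Since $\mathrm{supp}\,\mathcal{F}u\subset\lambda\mathcal{C}$, we have $\phi(\xi/\lambda)\widehat{u}(\xi)=\widehat{u}(\xi)$, so that $e^{t\Delta}u=g_{t,\lambda}*u$ with $g_{t,\lambda}=\mathcal{F}^{-1}\big(e^{-t|\xi|^2}\phi(\xi/\lambda)\big)$. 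By Young's inequality $\|e^{t\Delta}u\|_{L^p}\leq\|g_{t,\lambda}\|_{L^1}\|u\|_{L^p}$ for every $p\in[1,\infty]$ with the same constant, so it suffices to prove $\|g_{t,\lambda}\|_{L^1}\leq Ce^{-ct\lambda^2}$.

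Next I would remove $\lambda$ by the change of variables $\xi=\lambda\zeta$. Writing $g_{t,\lambda}(x)=\lambda^d h(\lambda x)$ with $h(y)=\int e^{iy\cdot\zeta}e^{-s|\zeta|^2}\phi(\zeta)\,d\zeta$ and $s:=t\lambda^2$, a further change of variables in $x$ gives $\|g_{t,\lambda}\|_{L^1}=\|h\|_{L^1}$, so the claim becomes $\|h\|_{L^1}\leq Ce^{-cs}$. The exponential factor is produced on the support of $\phi$, where $|\zeta|\geq c_0>0$, so that $e^{-s|\zeta|^2}\leq e^{-sc_0^2}$ there.

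To upgrade this pointwise bound to an $L^1$ bound I would gain spatial decay by integration by parts. Using $|y|^2 e^{iy\cdot\zeta}=-\Delta_\zeta e^{iy\cdot\zeta}$ and integrating by parts (the boundary terms vanish because $\phi$ is compactly supported), I obtain $(1+|y|^2)^N h(y)=\int e^{iy\cdot\zeta}(1-\Delta_\zeta)^N\big[e^{-s|\zeta|^2}\phi(\zeta)\big]\,d\zeta$ for every $N$. Differentiating the Gaussian produces at most $N$ powers of $s$ together with factors polynomial in $\zeta$, all multiplied by $e^{-s|\zeta|^2}$; on the compact support of $\phi$ these are controlled by $C_N(1+s^N)e^{-sc_0^2}$, and the polynomial growth in $s$ is absorbed into the exponential, $(1+s^N)e^{-sc_0^2}\leq C_N e^{-cs}$ for any fixed $0<c<c_0^2$. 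Hence $(1+|y|^2)^N|h(y)|\leq C_N e^{-cs}$, and choosing $2N>d$ makes $(1+|y|^2)^{-N}$ integrable, giving $\|h\|_{L^1}\leq Ce^{-cs}=Ce^{-ct\lambda^2}$, as required.

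I expect no genuine obstacle here: this is a standard Bernstein-type heat-smoothing estimate. The only point demanding care is the bookkeeping in the last step, namely verifying that the powers of $s=t\lambda^2$ generated when differentiating $e^{-s|\zeta|^2}$ are genuinely dominated by the exponential decay coming from the lower bound $|\zeta|\geq c_0$ on $\mathrm{supp}\,\phi$, so that uniformity in both $t$ and $\lambda$ (and in $p$) is preserved.
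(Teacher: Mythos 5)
Your argument is correct and coincides with the standard proof of this lemma in the cited sources (the paper itself states Lemma \ref{F4} without proof, quoting \cite{bahouri2011fourier} and \cite{Chemin1999Th}, whose proof is exactly this: cutoff $\phi$ on a slightly larger annulus, Young's inequality, rescaling to the single parameter $s=t\lambda^2$, and weighted integration by parts with $(1-\Delta_\zeta)^N$). The only quibble is bookkeeping: $(1-\Delta_\zeta)^N$ applied to $e^{-s|\zeta|^2}\phi(\zeta)$ involves up to $2N$ derivatives and hence produces powers of $s$ up to $s^{2N}$ rather than $s^N$, but this is immaterial since any fixed polynomial in $s$ is absorbed by $e^{-c_0^2 s}$ exactly as you describe.
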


\begin{prop}
\label{F5}
Let $0<T\leq\infty$, $I=[0,T)$, $2\leq p,r<\infty$, $s\in\mathbb{R}$. Denote $u$ as the solution in $\mathcal{S}'(\mathbb{R}^d)$ of
\begin{equation}
\label{6.5}
\left\{\begin{array}{l}
du-\Delta u dt=GdW_H,\\
u|_{t=0}=0,
\end{array}
\right.~~~~~\mathbb{P}-a.s.
\end{equation}
with $\mathscr{F}$-adapted process $G$ in $L^r(\Omega;L^r(I;\dot{\mathbb{B}}_{p,r}^{s-1}))$. Then for any $r_1\in [r,\infty]$, $u\in L^r(\Omega;\mathcal{L}^{r_1}(I;\dot{B}_{p,r}^{s+\frac{2}{r_1}-\frac{2}{r}})\cap
{C}(I;\dot{B}_{p,r}^{s-\frac{2}{r}}))$ satisfying
\begin{equation}
\label{6.6}
\mathbb{E}\|u\|^r_{\mathcal{L}^{r_1}(I;\dot{B}_{p,r}^{s+\frac{2}{r_1}-\frac{2}{r}})}\leq C\mathbb{E}\|G\|^r_{L^r(I;\dot{\mathbb{B}}_{p,r}^{s-1})}.
\end{equation}
\end{prop}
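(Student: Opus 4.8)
The plan is to argue frequency by frequency and to reduce the whole statement, for every $r_1\in[r,\infty]$ together with the time continuity, to the Da Prato-Kwapi\'{e}n-Zabczyk factorization. Applying $\Delta_j$ to (\ref{6.5}) and using $u|_{t=0}=0$, Duhamel's formula represents the solution as the stochastic convolution
$$\Delta_j u(t)=\int_0^t e^{(t-\sigma)\Delta}\Delta_j G(\sigma)\,dW_H(\sigma),$$
so that everything comes down to controlling $\big\{2^{j(s+\frac2{r_1}-\frac2r)}\|\Delta_j u\|_{L^{r_1}_T(L^p)}\big\}_{\ell^r}$ in $L^r(\Omega)$. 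The base case $r_1=r$ is immediate: by the It\^o isomorphism (\ref{2.2}) (applied with spatial exponent $p\ge2$ and stochastic exponent $r$) followed by the frequency-localized heat decay of Lemma \ref{F4}, one gets $\mathbb{E}\|\Delta_j u(t)\|_{L^p}^r\le C\,\mathbb{E}\big(\int_0^t e^{-2c(t-\sigma)2^{2j}}\|\Delta_j G(\sigma)\|^2_{L^p(H)}\,d\sigma\big)^{r/2}$; integrating in $t$, applying Young's convolution inequality with the kernel $\rho\mapsto e^{-2c\rho2^{2j}}$ (whose $L^1$ norm is $\sim 2^{-2j}$), and summing over $j$ with the weight $2^{jrs}$ yields (\ref{6.6}) with $r_1=r$, i.e. the gain of exactly one derivative that turns the $\dot{\mathbb{B}}^{s-1}_{p,r}$ datum into a $\dot B^s_{p,r}$ solution.

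For general $r_1$ and for continuity I would fix $\alpha\in(\frac1r,\frac12)$ and introduce
$$Y_\alpha(\tau)=\int_0^\tau(\tau-\sigma)^{-\alpha}e^{(\tau-\sigma)\Delta}G(\sigma)\,dW_H(\sigma).$$
Using the elementary identity $\frac{\sin\pi\alpha}{\pi}\int_\sigma^t(t-\tau)^{\alpha-1}(\tau-\sigma)^{-\alpha}\,d\tau=1$ together with the stochastic Fubini theorem to interchange $d\tau$ and $dW_H$, the stochastic convolution factorizes as $u(t)=\frac{\sin\pi\alpha}{\pi}\int_0^t(t-\tau)^{\alpha-1}e^{(t-\tau)\Delta}Y_\alpha(\tau)\,d\tau$. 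The stochastic half then repeats the $r_1=r$ computation with the extra singular weight: (\ref{2.2}) and Lemma \ref{F4} give $\mathbb{E}\|\Delta_jY_\alpha(\tau)\|_{L^p}^r\le C\,\mathbb{E}\big(\int_0^\tau(\tau-\sigma)^{-2\alpha}e^{-2c(\tau-\sigma)2^{2j}}\|\Delta_jG(\sigma)\|^2_{L^p(H)}\,d\sigma\big)^{r/2}$, and Young's inequality with the now integrable kernel $\rho\mapsto\rho^{-2\alpha}e^{-2c\rho2^{2j}}$ (here $\alpha<\frac12$ is what guarantees integrability, its $L^1$ norm being $\sim2^{2j(2\alpha-1)}$) produces $\mathbb{E}\|Y_\alpha\|^r_{L^r(I;\dot B^{s-2\alpha}_{p,r})}\le C\,\mathbb{E}\|G\|^r_{L^r(I;\dot{\mathbb{B}}^{s-1}_{p,r})}$.

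The deterministic half is a pathwise mapping property of the operator $R_\alpha Y(t)=\int_0^t(t-\tau)^{\alpha-1}e^{(t-\tau)\Delta}Y(\tau)\,d\tau$. Frequency-localizing and using Lemma \ref{F4} again, $\|\Delta_jR_\alpha Y(t)\|_{L^p}$ is bounded by the time convolution of $\|\Delta_j Y\|_{L^p}\in L^r_T$ against $\rho\mapsto\rho^{\alpha-1}e^{-c\rho2^{2j}}$; Young's inequality $L^a_T\ast L^r_T\hookrightarrow L^{r_1}_T$ with $\frac1a=1+\frac1{r_1}-\frac1r$ (which forces $r_1\ge r$, matching the hypothesis) gives a gain of $2(\alpha+\frac1{r_1}-\frac1r)$ derivatives, so that $R_\alpha$ sends $L^r_T(\dot B^{s-2\alpha}_{p,r})$ into $\mathcal{L}^{r_1}_T(\dot B^{s+\frac2{r_1}-\frac2r}_{p,r})$. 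The kernel lies in $L^a$ precisely when $\alpha>\frac1r-\frac1{r_1}$, the worst case $r_1=\infty$ being the source of the requirement $\alpha>\frac1r$, and that same integrability makes $t\mapsto R_\alpha Y(t)$ continuous with values in $\dot B^{s-2/r}_{p,r}$ by dominated convergence. Chaining the two halves, $u=\frac{\sin\pi\alpha}{\pi}R_\alpha Y_\alpha$ delivers both the regularity class and the estimate (\ref{6.6}), the two exponent shifts $-2\alpha$ and $+2(\alpha+\frac1{r_1}-\frac1r)$ canceling to leave exactly $s+\frac2{r_1}-\frac2r$.

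The main obstacle is the \emph{combined} admissibility of $\alpha$: the stochastic step needs $\alpha<\tfrac12$ while the deterministic step needs $\alpha>\tfrac1r$, so the window $(\tfrac1r,\tfrac12)$ is nonempty only for $r>2$, and one must keep careful track of the cancellation of gains at each Littlewood-Paley block. The borderline $r=2$ (where the factorization window degenerates, leaving only the direct $r_1=r$ estimate above, so that the remaining integrabilities and the endpoint continuity in $\dot B^{s-1}_{p,r}$ have to be argued by a separate limiting procedure) is the delicate point, as is the rigorous justification of the stochastic Fubini interchange and of the passage from finite-rank step processes to a general adapted $G$ underlying the use of (\ref{2.2}).
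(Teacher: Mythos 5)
Your proposal is essentially the paper's own proof: the base case $r_1=r$ via the It\^{o} isomorphism (\ref{2.2}), the heat decay of Lemma \ref{F4}, and Young's inequality, followed by the Da Prato--Kwapi\'{e}n--Zabczyk factorization for the supremum in time, exactly as in the appendix. There are two minor variations. First, you obtain every $r_1\in[r,\infty]$ in one stroke by applying Young's inequality $L^a_T\ast L^r_T\hookrightarrow L^{r_1}_T$ with $\frac1a=1+\frac1{r_1}-\frac1r$ in the deterministic half of the factorization, whereas the paper treats only $r_1=\infty$ (H\"older with exponent $r'$ against the kernel $\rho^{\alpha-1}e^{-c\rho 2^{2j}}$) and then gets (\ref{6.6}) by interpolation between $r_1=r$ and $r_1=\infty$; second, you derive the $C(I;\dot B^{s-\frac2r}_{p,r})$ continuity pathwise from the integrability of that same kernel, where the paper invokes density of $\mathcal{S}\cap\dot B^{s-\frac2r}_{p,r}$ in $\dot B^{s-\frac2r}_{p,r}$. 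Your exponent bookkeeping (the gains $-2\alpha$ and $+2(\alpha+\frac1{r_1}-\frac1r)$ cancelling to $s+\frac2{r_1}-\frac2r$, and the dyadic factors recombining to $2^{jr(s-1)}$) is correct. One observation of yours is worth emphasizing: the window $\alpha\in(\frac1r,\frac12)$ you impose is the right one, and the paper's stated choice $\alpha\in(0,\frac12)$ is too generous---its own H\"older step, which produces the factor $2^{-2jr(\alpha-1+\frac1{r'})}$, already requires $(\alpha-1)r'>-1$, i.e. $\alpha>\frac1r$. Hence the degeneration at $r=2$ that you flag is present, silently, in the paper's argument as well; for $r=2$ the $\mathcal{L}^\infty_T$ bound and continuity genuinely require a separate argument (e.g. the classical $L^2$ maximal estimate for stochastic convolutions together with a limiting procedure), so this caveat in your write-up is a point of care rather than a gap relative to the paper.
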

\begin{proof}
Thanks to $G\in \mathcal{S}_h'(\mathbb{R}^d,H)$ a.s., then the unique solution of (\ref{6.5}) $u\in C(I;\mathcal{S}'(\mathbb{R}^d))$ a.s., satisfying
$$\hat{u}(t,\xi)=\int_0^t e^{(t-\tau)|\xi|^2}\hat{G}(\tau,\xi)dW_H(\tau)~~~~\mathbb{P}-a.s..$$
As $j$ tends to $-\infty$, we have $\mathcal{F}(S_jG)\overset{\mathbb{P}-a.s.}{\rightarrow}0$, implies
$$\mathcal{F}(S_ju)\overset{\mathbb{P}-a.s.}{\rightarrow}0~~\text{in}~C(I;\mathcal{S}'(\mathbb{R}^d))\Rightarrow u\in C(I;\mathcal{S}'_h(\mathbb{R}^d))~~~~\mathbb{P}-a.s..$$
On the other hand,  we get
$$\Delta_ju(t)=\int_0^te^{(t-\tau)\Delta}\Delta_j G(\tau)dW_H(\tau).$$
Then by (\ref{2.2}), Lemma \ref{F4} and Young's inequality, we have
\begin{eqnarray}
\mathbb{E}\|u\|_{L_T^r\dot{B}_{p,r}^s}^r
&\leq& C\sum\limits_{j} 2^{jrs}\int_0^T\mathbb{E}\|\Delta_ju\|_{L^p}^rds\nonumber\\
&\leq& C\sum\limits_{j} 2^{jrs}\int_0^T\mathbb{E}\bigg\|\int_0^t e^{(t-\tau)\Delta}\Delta_j GdW_H\bigg\|_{L^p}^rds\nonumber\\
&\leq& C\sum\limits_{j} 2^{jrs}\int_0^T\mathbb{E}\bigg(\int_0^t\|e^{(t-\tau)\Delta}\Delta_j G\|_{L^p(\mathbb{R}^d;H)}^2\bigg)^{\frac{r}{2}}\nonumber\\
&\leq& C\mathbb{E}\sum\limits_{j} 2^{jrs}\int_0^T \bigg(\int_0^t c_1e^{-2c_2(t-\tau)2^{2j}}\|\Delta_jG\|_{L^p(\mathbb{R}^d;H)}^2\bigg)^{\frac{r}{2}}\nonumber\\
&\leq& C\mathbb{E}\sum\limits_{j} 2^{jr(s-1)}\|\Delta_jG\|_{L_T^rL^p(\mathbb{R}^d;H)}^r\nonumber\\
\label{f7}
&\leq& C\mathbb{E}\|G\|_{L_T^r\dot{\mathbb{B}}_{p,r}^{s-1}}^r.
\end{eqnarray}
For estimating the supper norm of time, we  adopt the Da Prato-Kwapi\'{e}n-Zabczyk factorization argument (see   \cite{brzezniak1997stochastic},  or Section 5.3 of   \cite{da2014stochastic}, and references therein), using stochastic Fubini theorem and the
equality
$$\int_\tau^t (t-s)^{\alpha-1}(s-\tau)^{-\alpha}ds=\Gamma(\alpha)\Gamma(1-\alpha)\ 0<\alpha<1.$$
We obtain
$$\int_0^t e^{(t-\tau)\Delta}\Delta_jG dW_H=\frac{1}{\Gamma(\alpha)\Gamma(1-\alpha)}\int_0^t e^{(t-s)\Delta}(t-s)^{\alpha-1}\int_0^s e^{(s-\tau)\Delta}(s-\tau)^{-\alpha}\Delta_jGdW_H ds$$
for $\alpha\in (0,\frac{1}{2}).$
Then  as similar as (\ref{f7}), we have
\begin{eqnarray}
&&\mathbb{E}\sup\limits_{t\in [0,T]}\bigg\|\int_0^t e^{(t-s)\Delta}\Delta_j G dW_H\bigg\|_{L^p}^r\nonumber\\
&\leq& C\frac{1}{\Gamma(\alpha)\Gamma(1-\alpha)}\mathbb{E}\sup\limits_{t\in [0,T]}\bigg(\int_0^t\bigg\|e^{(t-s)\Delta}(t-s)^{\alpha-1}\int_0^s e^{(s-\tau)\Delta}(s-\tau)^{-\alpha}\Delta_jGdW_H\bigg\|_{L^p} ds\bigg)^r\nonumber\\
&\leq& C\mathbb{E}\sup\limits_{t\in [0,T]}\bigg(\int_0^t c_1e^{-c_22^{2j}(t-s)}(t-s)^{\alpha-1}\bigg\|\int_0^s e^{(s-\tau)\Delta}(s-\tau)^{-\alpha}\Delta_jGdW_H\big\|_{L^p} ds\bigg)^r\nonumber\\
&\leq& C2^{-2jr(\alpha-1+\frac{1}{r'})}\int_0^T\mathbb{E}\bigg\|\int_0^s e^{(s-\tau)\Delta}(s-\tau)^{-\alpha}\Delta_jGdW_H\bigg\|_{L^p}^rds\nonumber\\
&\leq& C 2^{-2jr(\alpha-1+\frac{1}{r'})}\int_0^T\mathbb{E}\bigg(\int_0^s \|e^{ (s-\tau)\Delta}(s-\tau)^{- \alpha}\Delta_j
G\|_{L^p(\mathbb{R}^d;H)}^2 d\tau\bigg)^{\frac{r}{2}}ds\nonumber\\
&\leq& C 2^{-2jr(\alpha-1+\frac{1}{r'})}2^{-2jr(-\alpha+\frac{1}{2})}\mathbb{E}\|\Delta_j G\|_{L_T^rL^p(\mathbb{R}^d;H)}^r\nonumber\\
&\leq& C 2^{j(2-r)}\mathbb{E}\|\Delta_j G\|_{L_T^rL^p(\mathbb{R}^d;H)}^r.
\end{eqnarray}
Then,
\begin{eqnarray}
\mathbb{E} \|u\|_{\mathcal{L}_T^\infty \dot{B}_{p,r}^{s-\frac{2}{r}}}^r
&=&\sum\limits_{j}2^{jr(s-\frac{2}{r})}\mathbb{E}\sup\limits_{t\in [0,T]}\|\Delta_j u\|_{L^p}^r\nonumber\\
&=&\sum\limits_{j}2^{jr(s-\frac{2}{r})}\mathbb{E}\big\|\int_0^t e^{(t-s)\Delta}\Delta_j G dW_H\big\|_{L^p}^r\nonumber\\
&\leq&C\mathbb{E}\sum\limits_{j}2^{jr(s-\frac{2}{r})}2^{j(2-r)}\|\Delta_j G\|_{L_T^rL^p(\mathbb{R}^d;H)}^r\nonumber\\
&=&C\mathbb{E}\sum\limits_{j}2^{jr(s-1)}\|\Delta_j G\|_{L_T^rL^p(\mathbb{R}^d;H)}^r\nonumber\\
&=&C\mathbb{E}\|G\|_{L_T^r\dot{\mathbb{B}}_{p,r}^{s-1}}^r.
\end{eqnarray}
Then (\ref{6.6}) is obtained by the interpolation inequality. This implies the lemma except $u\in L^r(\Omega;{C}(I;\dot{B}_{p,r}^{s-\frac{2}{r}}))$, which is held by $\mathcal{S}\cap \dot{B}_{p,r}^{s-\frac{2}{r}}$ dense in $\dot{B}_{p,r}^{s-\frac{2}{r}}$.
\end{proof}

\section*{acknowledgments}
This work is  supported by Zhejiang Provincial Natural Science Foundation of China LR17A010001, and National Natural Science Foundation of China 11331005  and 11621101.

\end{document}